\newcolumntype{L}{D{.}{.}{2,5}}
\theoremstyle{plain}
\newtheorem{thm}{Theorem}
\newtheorem{hypo}{Conjecture}
\newtheorem{proposition}{Proposition}
\newtheorem{lemma}{Lemma}
\newtheorem{corollary}{Corollary}
\newtheorem{remark}{Remark}
\newtheorem{mydef}{Definition}
\DeclareMathOperator{\inc}{inc}
\DeclareMathOperator{\sign}{sign}
\DeclareMathOperator{\psh}{PSH}
\DeclareMathOperator{\loc}{loc}
\begin{document}

\title[Paley problem for plurisubharmonic functions]{\textbf On sharp constants in Paley problem for \\ Plurisubharmonic Functions of lower Order $\rho>1$}
\author{\textbf{A. B\"erd\"ellima}}
\address{Institute of Mathematics}
\address{Technische Universit\"at Berlin}\address{10623 Berlin, Germany}
 \thanks{Electronic address: \texttt{berdellima@math.tu-berlin.de/berdellima@gmail.com},\\ MSC:  31A05, 31B05, 32U05, 26D10, 42B25}
\maketitle

\begin{abstract}
In 1999 Khabibullin established the best estimate in Paley problem for a plurisubharmonic function $u$ of finite lower order $0\leq\rho\leq 1$. For $\rho>1$ obtaining a sharp estimate has remained an open question. In this work we solve this problem. We also provide some estimates for the types of the characteristic functions $T(r,u)$ and $M(r,u)$.  
\end{abstract} 

\noindent \textbf{Keywords:} 
plurisubharmonic function, Paley problem, Khabibullin's conjecture.



\section{Introduction}
\subsection{Plurisubharmonic functions}
A function $u$ with values in $[-\infty,\infty)$ is called plurisubharmonic (psh) if 
\begin{enumerate}[(i)]
\item $u$ is upper semicontinuous and $u\not\equiv -\infty$;
\item for every $r\geq 0$ and every $z,w\in\mathbb C^n$ it holds that
\begin{equation}
	\label{eq:psh}
	u(z)\leq\frac{1}{2\pi}\int^{2\pi}_0u(z+re^{i\theta}w)\,d\theta.
\end{equation}
\end{enumerate}
The second condition is equivalent to saying that the mapping $\tau\mapsto u(z+\tau\,w)$ is subharmonic in $\mathbb C$ for any fixed $z,w\in\mathbb C^n$.
The set of psh functions forms a convex cone in the vector space of semicontinuous functions. If $u_1, u_2,...$ is a decreasing sequence of psh functions, then $u^*=\lim_{n\to\infty}u_n$ is also a psh function (\cite[p.225-226]{Hormander1}). 
For a function $u$ let $M(r,u)\coloneqq\max\{u(z):|z|=r,z\in\mathbb{C}^n\}$ and $u^+(z)=\max\{u(z),0\}$. Define
\begin{equation}
 \label{Nev-main}
 T(r,u)\coloneqq\int_{\mathbb{S}^n}u^+(r\zeta)\,ds_n(\zeta)
 \end{equation}
 to be the Nevalinna characteristic of $u$ where $\mathbb{S}^n$ is the unit sphere in $\mathbb{C}^n$ and $s_n$ is the rotation-invariant (\cite{Rudin}, p.12) positive Borel measure on $\mathbb{S}^n$ such that $s_n(\mathbb{S}^n)=1$. We say a function $u$ is of {\em lower order} $\rho>0$ if
 \begin{equation}
  \label{lowerorder}
  \liminf_{r\to+\infty}\frac{\log T(r,u)}{\log r}=\rho.
 \end{equation}
 \subsection{Paley problem}
An important issue in the theory of functions of complex variables is estimation of the following quantity 
 \begin{equation}
 \label{eq:M-T}
 \vartheta(u)\coloneqq\liminf_{r\to+\infty}\frac{M(r,u)}{T(r, u)}
 \end{equation}
for a function $u$ defined on $\mathbb C^n, n\geq 1$. Calculation of $\vartheta(u)$, when $u$ is an entire function in $\mathbb C$ of finite or infinite order was first considered by \cite[Paley-1932]{Paley}. The estimate $\vartheta(u)$ was known to Paley for the range $0\leq\rho\leq1/2$. 
The case $\rho>1/2$ remained an open question for many years and eventually it became known as Paley problem. This problem was conclusively proved in 1967 by Govorov \cite{Govorov}. Petrenko \cite{Petrenko} extended Paley problem to meromorphic functions of finite lower order. An analogue estimation of $\vartheta(u)$ for subharmonic functions $u$ of finite lower order in $\mathbb{R}^m,m\geq 2$, was obtained by
Dahlberg \cite{Dahlberg}. Generalizations for the estimation of \eqref{eq:M-T} in terms of the $L^p$ metric for $1\leq p<+\infty$ for subharmonic functions in $\mathbb{R}^m$ were also obtained by Sodin \cite{Sodin} for $m=2$. Kondratyuk, Tarasyuk, and Vasyl'kiv \cite{Kondratyuk} obtained respective generalizations for $m>2$. Eventually the problem of Paley was carried over to complex functions of several variables.  Khabibullin considered it for meromorphic functions in $\mathbb{C}^n$ for $n>1$ and later posed the problem of finding the best analogue estimate of \eqref{eq:M-T} for the class of psh functions and entire functions of severable variables \cite[Khabibullin]{Kh95, Kh99}. For $n>1$ define 
$$ P_n(\rho)\coloneqq\left\{
\begin{array}{lr}
\displaystyle\frac{\pi\rho}{\sin(\pi\rho)}\prod_{k=1}^{n-1}\Big(1+\frac{\rho}{2k}\Big) & : 0\leq\rho\leq\displaystyle\frac{1}{2}\\
\pi\rho \displaystyle\prod_{k=1}^{n-1}\Big(1+\frac{\rho}{2k}\Big)& : \rho>\displaystyle\frac{1}{2}\end{array}
\right.$$
More precisely Khabibullin showed:
\begin{thm}\cite[Theorem 1]{Kh99}
	\label{Khthm}
	Let $u$ be a psh function in $\mathbb{C}^n$ of finite lower order $\rho$. Then
	\begin{align}
	\label{Kh99} 
	\vartheta(u)\leq \left\{
	\begin{array}{lr}
	\displaystyle P_n(\rho) & : 0\leq\rho\leq\displaystyle 1\\
	e^{n-1}P_n(\rho) & : \rho>\displaystyle 1
	\end{array}
	\right. 
	\end{align} 
	The estimate for $0\leq\rho\leq 1$ is best possible. 
\end{thm}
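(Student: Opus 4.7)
My approach would be to reduce the $\mathbb{C}^n$ problem to the classical one-variable Paley problem by complex-line slicing, then to pay for the passage $\mathbb{C}\to\mathbb{C}^n$ by an explicit spherical integral. Concretely, for every $\zeta\in\mathbb{S}^n$ the function $u_\zeta(w):=u(w\zeta)$ is subharmonic on $\mathbb{C}$; write $T_1(r,u_\zeta)$ and $M_1(r,u_\zeta)$ for its classical Nevanlinna characteristic and its maximum on $|w|=r$. Rotation invariance of $s_n$ yields the bridge identities
\begin{equation*}
T(r,u)=\int_{\mathbb{S}^n}T_1(r,u_\zeta)\,ds_n(\zeta),\qquad M(r,u)=\sup_{\zeta\in\mathbb{S}^n}M_1(r,u_\zeta).
\end{equation*}

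Fixing a sequence $r_j\to\infty$ realising the $\liminf$ in \eqref{eq:M-T} and choosing $\zeta_j\in\mathbb{S}^n$ where $M_1(r_j,u_{\zeta_j})$ is essentially $M(r_j,u)$, I would invoke the sharp one-dimensional Paley--Govorov--Petrenko--Dahlberg estimate
\begin{equation*}
\liminf_{r\to\infty}\frac{M_1(r,v)}{T_1(r,v)}\leq\begin{cases}\pi\rho/\sin(\pi\rho),&0\leq\rho\leq 1/2,\\ \pi\rho,&\rho>1/2,\end{cases}
\end{equation*}
applied to the subharmonic functions $u_{\zeta_j}$ after verifying that their lower order is bounded above by $\rho$ (this uses the averaged identity for $T$ together with Fatou). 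This yields the leading $\pi\rho$-factors in $P_n(\rho)$.

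The residual product $\prod_{k=1}^{n-1}(1+\rho/(2k))$ is produced by the spherical integral. Using the Hopf decomposition of $s_n$ over $\mathbb{CP}^{n-1}$ one computes, for a model radial power, an identity of the form $\int_{\mathbb{S}^n}|\langle z,\zeta\rangle|^\rho\,ds_n(\zeta)=\bigl(\prod_{k=1}^{n-1}(1+\rho/(2k))\bigr)^{-1}|z|^\rho$, so that the ratio between a single slice's $T_1$ and the $n$-dimensional $T$ at the extremal scale is exactly this product. For $0\leq\rho\leq 1$ the passage is sharp, because when $u$ depends on one complex coordinate the identity above is attained; this simultaneously delivers the sharpness claim by taking $u$ to be a classical 1D Paley extremal (e.g. a subharmonic function with growth $|w|^\rho\cos(\rho\arg w - \alpha)$) transferred to $\mathbb{C}^n$ via a single coordinate.

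For $\rho>1$ the obstruction is that the directional supremum $\sup_\zeta M_1$ is not necessarily attained where the slice characteristic $T_1(\cdot,u_\zeta)$ is comparable to the average $T(r,u)$, so the pointwise inequality $M_1(r_j,u_{\zeta_j})\leq\pi\rho\,T_1(r_j,u_{\zeta_j})$ must be combined with a comparison between $T_1(r_j,u_{\zeta_j})$ and the averaged $T(r_j,u)$. I would carry this out by a Harnack-type argument on the positive quantities $T_1(r,u_\zeta)$, bounding the spherical concentration by iterating the elementary inequality $(1+x/k)^k\leq e^x$ across the $n-1$ complex codimensions, which introduces exactly the multiplicative loss $e^{n-1}$. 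The main obstacle I anticipate is precisely this step: quantifying how much $T_1(r,u_\zeta)$ can spike near an extremal direction without violating the global growth encoded by the lower order $\rho$, and doing so with a constant that matches $e^{n-1}$ rather than a worse bound.
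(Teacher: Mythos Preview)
Your slicing idea and the spherical identity $\int_{\mathbb{S}^n}|\zeta_1|^\rho\,ds_n(\zeta)=\bigl(\prod_{k=1}^{n-1}(1+\rho/(2k))\bigr)^{-1}$ are both correct and relevant, but the core step of the argument has a genuine gap. The one-dimensional Paley--Govorov bound is an asymptotic statement: for a \emph{fixed} subharmonic $v$ of lower order $\rho$ it controls $\liminf_r M_1(r,v)/T_1(r,v)$. You are trying to use it at a specific radius $r_j$ for a direction $\zeta_j$ that changes with $j$, i.e.\ you need $M_1(r_j,u_{\zeta_j})\leq C(\rho)\,T_1(r_j,u_{\zeta_j})$ pointwise, and the $1$D theorem simply does not deliver that. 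Passing to a limit direction does not help without strong equicontinuity in $\zeta$, and the Fatou argument you sketch runs the wrong way: it bounds the averaged characteristic from below by slice quantities, not individual slice lower orders from above. This is precisely the obstacle the actual proof is engineered to avoid.

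The route taken in \cite{Kh99}, as summarized in the Preliminaries here, is structurally different. One first normalizes via P\'olya peaks: with $r_k$ a peak sequence, the functions $u_k(z)=u(r_kz)/T(r_k,u)$ have a subsequential limit $v\in\psh_\rho(\mathbb{C}^n)$ with $T(r,v)\leq r^\rho$ and $\vartheta(u)\leq M(1,v)$; this replaces your ``$\liminf$-realising sequence plus varying $\zeta_j$'' by a single normalized function. For this $v$ one then uses, for each fixed $\zeta$, the \emph{pointwise} integral bound $v_\zeta(1)\leq 4\mu^2\int_0^\infty T(t,v_\zeta)\,\varphi_\mu(t)\,dt$ from \cite[Lemma~4.1]{Essen}, not the asymptotic Paley estimate. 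The product $\prod_{k=1}^{n-1}(1+\rho/(2k))$ does not come from your spherical integral but from the chain of subspace inequalities \eqref{relationL}--\eqref{equivL}, which compare $G^{n-1}[T(\cdot,v_\zeta)]$ with $T(\cdot,v)t^{2n-2}/(2^{n-1}(n-1)!)$; the factorial constants produce exactly that product after $n-1$ integrations by parts. Finally, the factor $e^{n-1}$ for $\rho>1$ is not a Harnack concentration loss in the $\zeta$-variable; it arises in estimating the integral $\int_0^\infty Q^n[s(x)/x](t)\,\psi_\rho(t)\,dt$ once $\psi_\rho=(-1)^n\varphi_\rho^{(n)}$ fails to be of constant sign, and Khabibullin's crude bound there is what the present paper sharpens.
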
 
 
\subsection{Khabibullin's conjecture} 
In connection with Paley problem for psh functions, Khabibullin \cite{Kh02} has made the following conjecture:
\begin{hypo}[Khabibullin's conjecture]
	\label{Khbconj}
	Let $S(t)$ be a nonnegative increasing function on $[0,+\infty)$, and convex with respect to $\log t$. Further let $\rho>1$ and $n\geq 2,n\in\mathbb{N}$. If \begin{equation}
	\label{eq:conj1}
	\int^1_0S(tx)(1-x^2)^{n-2}\,dx\leq t^{\rho}\hspace{0.3cm}t\geq 0
	\end{equation}
	then
	\begin{equation}
	\label{eq:conj2}
	\int^{+\infty}_0S(t)\frac{t^{2\rho-1}}{(1+t^{2\rho})^2}\,dt\leq \frac{\pi(n-1)}{2\rho}\prod_{k=1}^{n-1}\Big(1+\frac{\rho}{2k}\Big).
	\end{equation}
\end{hypo}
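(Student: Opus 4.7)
My plan rests on the linearity of both sides of the proposed inequality in $S$ and on exploiting the convexity of $S$ with respect to $\log t$ to obtain a useful integral representation. Setting $\phi(s) := S(e^s)$, the function $\phi$ is nonnegative, nondecreasing, and convex on $\mathbb{R}$. Moreover, its limit at $-\infty$ must vanish: otherwise the left-hand side of \eqref{eq:conj1} would tend to $\phi(-\infty)\cdot\tfrac12 B(\tfrac12,n-1)>0$ as $t\to 0^+$, while the right-hand side tends to zero. A standard convex-analytic fact then gives
\begin{equation*}
S(t) \;=\; \int_0^\infty \log^+(t/\tau)\, d\mu(\tau)
\end{equation*}
for some nonnegative Borel measure $\mu$ on $(0,\infty)$. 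Under this representation, both sides of \eqref{eq:conj1} and \eqref{eq:conj2} become linear functionals of $\mu$.

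Applying Fubini, the hypothesis takes the form $\int_0^\infty H(t/\tau)\,d\mu(\tau) \le t^\rho$ for all $t\ge 0$, where $H(r):=\int_0^1 \log^+(rx)(1-x^2)^{n-2}\,dx$ vanishes on $[0,1]$ and is explicit; correspondingly the conclusion \eqref{eq:conj2} reads $\int_0^\infty K(\tau)\,d\mu(\tau) \le \frac{\pi(n-1)}{2\rho}\prod_{k=1}^{n-1}\bigl(1+\tfrac{\rho}{2k}\bigr)$ with
\begin{equation*}
K(\tau) \;=\; \int_0^\infty \log^+(t/\tau)\, \frac{t^{2\rho-1}}{(1+t^{2\rho})^2}\, dt,
\end{equation*}
which is computable in closed form via $s=t/\tau$ and $\eta=(s\tau)^{2\rho}$. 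The problem is then purely one of linear-programming duality: it suffices to produce a nonnegative measure $d\lambda$ on $(0,\infty)$ with $\int d\lambda \le \frac{\pi(n-1)}{2\rho}\prod_{k=1}^{n-1}(1+\rho/(2k))$ such that
\begin{equation*}
K(\tau) \;\le\; \int_0^\infty H(t/\tau)\, t^{-\rho}\, d\lambda(t) \qquad \text{for all } \tau>0,
\end{equation*}
since integrating this against $d\mu(\tau)$ and swapping the order of integration uses the hypothesis once and yields the conclusion.

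The natural ansatz is to take $d\lambda(t) = c\, t^\rho\cdot\frac{t^{2\rho-1}}{(1+t^{2\rho})^2}\,dt$ (a reweighted copy of the integrand appearing in \eqref{eq:conj2}), with the constant $c$ chosen so that $\int d\lambda = \pi c/(4\rho)$ hits the prescribed bound; the factor $\prod_{k=1}^{n-1}(1+\rho/(2k))$ should then emerge after iterated integration by parts in the variable $x$, each stroke converting one power of $(1-x^2)$ into a factor of the form $(1+\rho/(2k))$ through a beta-type identity.

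The main obstacle will be verifying the resulting scalar pointwise inequality for \emph{all} $\tau>0$ and \emph{all} $\rho>1$, and checking that the constant is sharp. The restriction $\rho>1$ is essential: only then do the relevant integrals converge with the correct monotonicity, consistent with the different sharp constants in \Cref{Khthm} when $0\le\rho\le 1$. If the direct dual-certificate approach fails to be tight, the fallback is a reduction to extreme points, testing the inequality on atomic measures $\mu=\delta_{\tau_0}$, i.e.\ on the single-atom functions $S(t)=\log^+(t/\tau_0)$; the delicate point is that multi-atom measures may saturate the hypothesis in ways that no individual atom does, and controlling this interaction — quite possibly by a further change of variables that decouples the $\tau_0$ dependence — is where I expect the real work to lie.
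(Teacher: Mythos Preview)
The statement you are attempting to prove is labeled \texttt{hypo} (Conjecture) in the paper, and the paper explicitly records that it is \emph{false}: Sharipov gave the first counterexample, and the author's earlier work extended this to all $n\ge 2$ and $\rho>1$. The paper does not prove Conjecture~\ref{Khbconj}; rather, it replaces the right-hand side of \eqref{eq:conj2} by the strictly larger quantity $\frac{n-1}{2\rho^2}K_n(\rho)$ and determines the sharp constant $K_n(\rho)$, with $K_n(\rho)>P_n(\rho)$ coming precisely from the failure of the conjectured bound (see Remark~\ref{r:compact}).

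Consequently your programme cannot succeed, and the point of failure is locatable. Your dual-certificate step asks for a nonnegative measure $d\lambda$ with the stated mass bound such that $K(\tau)\le\int H(t/\tau)\,t^{-\rho}\,d\lambda(t)$ for every $\tau$. By weak LP duality, the existence of such a certificate would imply the primal inequality \eqref{eq:conj2} for every admissible $S$; since counterexamples to \eqref{eq:conj2} exist, no certificate exists, and in particular your ``natural ansatz'' cannot satisfy the pointwise inequality for all $\tau$. The mechanism behind this is exactly what drives the paper: after $n$ integrations by parts the functional becomes $\int Q^n[s(x)/x](t)\,\psi_\rho(t)\,dt$ with $\psi_\rho(t)=(-1)^n\varphi_\rho^{(n)}(t)$, and for $\rho>1$, $n\ge 2$ the kernel $\psi_\rho$ changes sign on a nonempty bounded set $D_-$ (Proposition~\ref{p:D}). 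That sign change is what kills any nonnegative dual certificate and simultaneously furnishes the correction term $\sum_{i\in I}\sum_{k=0}^n(\Phi_{\rho,k}(\tau_{i-1})-\Phi_{\rho,k}(\tau_i))>0$ in the true sharp bound. Your fallback of testing on single atoms $S(t)=\log^+(t/\tau_0)$ is likewise not a proof strategy here: the extremal behaviour comes from functions that saturate the constraint on $D_+$ while being depressed on $D_-$ (cf.\ the maximizing sequence in Proposition~\ref{p:max}), not from individual log-kernels.
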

One can express Conjecture \ref{Khbconj} in two other equivalent forms (\cite[Baladai--Khabibullin]{Kh10}), one of which is in terms of nonnegative increasing functions 
studied also by the author \cite{AB1, AB2, AB3}. As remarked by Khabibullin \cite{Kh02} the validation of this conjecture suffices to conclude that $P_n(\rho)$ is a sharp estimate for $\vartheta(u)$ when $\rho>1$.
However Sharipov \cite{Sharipov} constructs the first counterexample to this hypothesis. Following Sharipov the author extended this negative result in \cite{AB2} for $n=2$ and any $\rho>1$. In \cite{AB3} a counterexample was constructed for the general case $n\geq 2,\rho>1$. By Theorem \ref{Khthm} $\vartheta(u)$ is uniformly bounded by the constant $e^{n-1}P_n(\rho)$ when $\rho>1$. Therefore there must exists a best possible estimate $K_n(\rho)\leq e^{n-1}P_n(\rho)$ such that $\vartheta(u)\leq K_n(\rho)$ uniformly for all psh $u$ when $\rho>1$. We wish to call $K_n(\rho)$ the \textit{Khabibullin's constant}. Now note that Conjecture \ref{Khbconj} would be a true statement if for the second integral \eqref{eq:conj2} instead the bound $\displaystyle\frac{n-1}{2\rho^2} K_n(\rho)$ is  substituted. It is the main goal of this work to establish the exact formula for $K_n(\rho)$. More precisely we show that 
\begin{equation}
\label{eq:K-estimate}
K_n(\rho)= P_n(\rho)+\sum_{i\in I}\sum_{k=0}^n\Big(\Phi_{\rho, k}(\tau_{i-1})-\Phi_{\rho, k}(\tau_{i})\Big)
\end{equation}
where $\Phi_{\rho, k}$ are certain functions related to $\varphi_{\rho}(t)\coloneqq1/(1+t^{\rho})$ and $\tau_i, i\in I$ run over a certain subset of the zeros of the $n$-th derivative of $\varphi_{\rho}$.

\section{Preliminaries}
\subsection{Discussion}
For clarity and completeness we follow the steps of Khabibullin \cite{Kh99}.
Let $v$ be a psh function in $\mathbb{C}^n$ such that $v(0)=0$ and for some $\rho>0$ the inequality holds
  \begin{equation}
   \label{Nevestimate}
   T(r,v)\leq r^{\rho}\quad\forall r>0.
  \end{equation}
Theorem \ref{Khthm} follows from the estimate (see \cite[Main Lemma]{Kh99}) 
 \begin{align}
  \label{Mestimate} 
   M(1,v)\leq \left\{
       \begin{array}{lr}
        \displaystyle P_n(\rho) & : 0\leq\rho\leq\displaystyle 1\\
        e^{n-1}P_n(\rho) & : \rho>\displaystyle 1.
 \end{array}
     \right.
  \end{align} 
In view of \eqref{Mestimate} it is desirable to improve the upper bound on $M(1,v)$ when $\rho>1$.
Without loss of generality assume that $v$ is nonnegative.
For a fixed $\zeta\in\mathbb{S}^n$ consider the subharmonic (with respect to
$w\in\mathbb{C}$ ) slice function $v_{\zeta}(w)=v(\zeta w)$. By virtue of \cite[Lemma 1]{Kh99} the following estimate is true
\begin{equation}
 \label{vestimate}
 v_{\zeta}(1)\leq 4\mu^2\int^{+\infty}_0T(t,v_{\zeta})\varphi_{\mu}(t)\,dt\quad\text{where}\quad \mu=\max\Big\{\frac{1}{2},\rho\Big\},\quad\varphi_{\mu}(t)=\frac{t^{2\mu-1}}{(1+t^{2\mu})^2}
\end{equation}
and 
\begin{equation}
 \label{Nev}
 T(t,v_{\zeta})=\frac{1}{2\pi}\int^{2\pi}_0v_{\zeta}(te^{i\theta})\,d\theta
\end{equation}
is the Nevanlinna characteristic of the function $v_{\zeta}$. Inequality \eqref{vestimate} is a direct application of a more general result about subharmonic functions $u$ in $\mathbb{C}$ for which their Laplacian $\Delta u$ vanishes identically in a neighbourhood of the origin (see \cite[Lemma 4.1]{Essen}).
Given an arbitrary $k$-dimensional complex subspace $L_k\subset\mathbb{C}^n$ we let 
$T(r,v;L_k)$ be the Nevanlinna
characteristic of the restriction of $v$ to $L_k$.
$T(r,v;L_k)$ is nondecreasing and convex in $\log r$, therefore psh for each $k=1,2,...,n$. 
By \cite[Lemma 2]{Kh99} for $L_k\subset L_{k+1}$
\begin{equation}
 \label{relationL}
 r\int^r_0T(t,v;L_k)t^{2k-1}\,dt\leq \frac{1}{2k}T(r,v;L_{k+1})r^{2(k+1)-1}\hspace{0.3cm}r>0.
\end{equation}
An iterative application of \eqref{relationL} for a chain of complex subspaces $L_1\subset L_2\subset...\subset L_k$ for $k\leq n$, yields
\begin{align}
 \label{iterativeL} 
 t\int^t_0t_{k-1}\int^{t_{k-1}}_0t_{k-2}...\int^{t_3}_0t_2\int^{t_1}_0t_1T(t_1,v;L_1)&\,dt_1\,dt_2...\,dt_{k-1}\\\nonumber&\leq \frac{1}{2^{k-1}(k-1)!}T(t,v;L_k)t^{2k-1}.
\end{align}
If $G$ denotes the operator defined by 
\begin{equation*}
 \label{opQ}
 G[f](t)\coloneqq\int^t_0uf(u)\,du
\end{equation*}
then we have the identity 
\begin{equation*}
 \label{Qidentity}
 G^{k}[f](t)=\frac{1}{2^{k-1}(k-1)!}\int^t_0(t^2-u^2)^{k-1}uf(u)\,du\quad k\geq 1.
\end{equation*}
We can write \eqref{iterativeL} equivalently as 
\begin{equation}
\label{equivL}
 G^{k-1}[T(\cdot,v;L_1)](t)\leq\frac{1}{2^{k-1}(k-1)!}T(t,v;L_k)t^{2k-2}.
\end{equation}
By definition $T(t,v;L_1)=T(t,v_{\zeta})$ and $T(t,v;L_n)=T(t,v)$. In the special case $k=n$ we obtain 
\begin{equation}
\label{equivL}
 G^{n-1}[T(\cdot,v_{\zeta})](t)\leq\frac{1}{2^{n-1}(n-1)!}T(t,v)t^{2n-2}.
\end{equation}
\begin{lemma}\cite[Proposition 5.1]{Shabat}
\label{inc}
 A real valued function $S(t)$, with $S(0)=0$, is increasing on $[0,+\infty)$ and convex with respect to $\log t$ if and only if there exists an increasing function $s(t)$ on $[0,+\infty)$ such that $S(t)$ can be represented as 
 \begin{equation}
 \label{logconvex}
 S(t)=\int^t_0\frac{s(x)}{x}\,dx \quad t\geq 0.
 \end{equation}  
\end{lemma}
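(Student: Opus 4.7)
The plan is to reduce convexity with respect to $\log t$ to ordinary convexity via the substitution $u=\log t$, and then invoke the classical representation of a convex function on $\mathbb{R}$ as the integral of its monotone right derivative.

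For the easier ``if'' direction, I would start from a representation $S(t)=\int_0^t s(x)/x\,dx$ with $s$ increasing on $[0,+\infty)$. Finiteness of the integral near $0$ forces $s(0+)=0$, so $s\geq 0$ and $S$ is nondecreasing. Setting $\phi(u):=S(e^u)$, a direct change of variables yields $\phi'(u)=s(e^u)$ (a.e.), which is increasing in $u$; hence $\phi$ is convex on $\mathbb{R}$, i.e.\ $S$ is convex with respect to $\log t$. This direction amounts to routine bookkeeping.

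For the ``only if'' direction I would again set $\phi(u):=S(e^u)$ on $\mathbb{R}$. The hypotheses translate into: $\phi$ is convex and nondecreasing, with $\phi(u)\to S(0)=0$ as $u\to-\infty$. A convex function on $\mathbb{R}$ admits an increasing right-derivative $\phi'_+$, and for any $u_0<u$ one has $\phi(u)-\phi(u_0)=\int_{u_0}^u\phi'_+(v)\,dv$. Letting $u_0\to-\infty$, using $\phi(u_0)\to 0$ and monotone convergence (valid since $\phi'_+\geq 0$ by monotonicity of $\phi$), I obtain $\phi(u)=\int_{-\infty}^u\phi'_+(v)\,dv$. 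The substitution $v=\log x$, together with the definition $s(x):=\phi'_+(\log x)$ for $x>0$ and $s(0):=\lim_{x\to 0^+}s(x)$, then delivers $S(t)=\int_0^t s(x)/x\,dx$, with $s$ increasing because $\phi'_+$ and $\log$ both are.

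The main obstacle is the careful handling of the endpoint behavior at $t=0$ (equivalently $u=-\infty$): one has to justify that $\phi(u_0)\to 0$ and that the improper integral $\int_{-\infty}^u\phi'_+(v)\,dv$ converges. This is precisely where the normalization $S(0)=0$ is indispensable; without it, no increasing $s$ with $s(0+)=0$ would exist and the integral $\int_0^t s(x)/x\,dx$ would diverge at the origin. A secondary, milder subtlety is that convexity of $\phi$ gives $\phi'_+$ only almost everywhere; but since $\phi'_+$ is monotone it is integrable on compacta and the fundamental theorem for convex functions still applies, so no further regularization of $\phi$ is needed.
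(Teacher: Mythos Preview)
The paper does not supply its own proof of this lemma; it merely cites \cite[Proposition 5.1]{Shabat} and uses the statement as a black box. So there is no ``paper's proof'' to compare against.

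Your argument is the standard one and is essentially correct. The reduction $\phi(u)=S(e^u)$ converts log-convexity to ordinary convexity, and the integral representation of a convex function via its monotone right derivative does the rest. One small point you flag but do not quite close: to get $\phi(u_0)\to 0$ as $u_0\to-\infty$ you need $\lim_{t\to 0^+}S(t)=S(0)=0$, i.e.\ right-continuity of $S$ at the origin. Monotonicity alone does not give this (e.g.\ $S(0)=0$, $S(t)=1$ for $t>0$ is nondecreasing and log-convex but has no representation of the required form). In the paper's application $S(t)=T(t,v_\zeta)$ is continuous, so the issue is moot; if you want the lemma in full generality you should either add right-continuity at $0$ to the hypotheses or observe that convexity of $\phi$ on all of $\mathbb{R}$ together with $\phi\ge 0$ and $\phi$ nondecreasing forces $\lim_{u\to-\infty}\phi(u)$ to exist and then argue that this limit must equal $S(0)$ under a suitable reading of ``$S(0)=0$''.
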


\begin{proposition}
	\label{p:s}
	Let $v$ be a psh function of finite lower order $\rho>0$. Then there exists a nonnegative increasing function $s(\cdot,v_{\zeta}):[0,+\infty)\to[0,+\infty)$ such that  
	\begin{equation}
		\label{eq:growth}
	\int^1_0(1-u)^{n-1}\frac{s(tu,v_{\zeta})}{u}\,du\leq t^{\rho/2}	
	\end{equation}
	and 
\begin{equation}
\label{vestimate2}
v_{\zeta}(1)\leq 2\rho\int^{+\infty}_0\frac{s(t,v_{\zeta})}{t}\frac{1}{1+t^{\rho}}\,dt.
\end{equation} 
\end{proposition}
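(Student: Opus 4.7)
The plan is to produce $s(\cdot, v_\zeta)$ by applying Lemma \ref{inc} to the rescaled Nevanlinna characteristic $\tilde T(\tau) \coloneqq T(\sqrt{\tau}, v_\zeta)$, and then to transcribe the two inequalities already derived earlier in this section, namely \eqref{equivL} at $k=n$ and \eqref{vestimate}, into the forms \eqref{eq:growth} and \eqref{vestimate2} via the substitution $\tau = t^2$ together with Fubini. Since $T(\cdot, v_\zeta)$ is nonnegative, nondecreasing, convex with respect to $\log r$, and vanishes at $r=0$, the affine change of variable $\log r = \tfrac12\log\tau$ transfers these properties to $\tilde T$ with respect to $\log \tau$. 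Lemma \ref{inc} therefore produces a nonnegative nondecreasing $s(\cdot, v_\zeta)$ such that
\begin{equation*}
\tilde T(\tau) = \int_0^\tau \frac{s(x, v_\zeta)}{x}\,dx, \qquad \tau \geq 0.
\end{equation*}
This is the function claimed in the proposition.

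To derive \eqref{eq:growth}, I start from \eqref{equivL} at $k=n$, combined with the standing normalization $T(t,v) \leq t^\rho$. Substituting $u^2 = y$ inside $G^{n-1}[T(\cdot, v_\zeta)](t)$ rewrites the bound as
\begin{equation*}
\tfrac12 \int_0^{t^2}(t^2-y)^{n-2}\tilde T(y)\,dy \leq \tfrac{1}{2(n-1)}\,t^{2n-2+\rho}.
\end{equation*}
Inserting the integral representation of $\tilde T$ and exchanging the order of integration by Fubini exposes the $y$-integral $\int_z^{t^2}(t^2-y)^{n-2}\,dy = (t^2-z)^{n-1}/(n-1)$; the substitution $z = \tau u$ with $\tau = t^2$ then converts the inequality to \eqref{eq:growth}.

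To derive \eqref{vestimate2}, I invoke \eqref{vestimate} with $\mu = \rho$ (so the argument is run for $\rho \geq 1/2$; the remaining range is already covered with sharper constants by \cite{Kh99}), substitute $T(t, v_\zeta) = \tilde T(t^2)$, and use Fubini once more to swap the order of integration, obtaining
\begin{equation*}
\int_0^\infty T(t, v_\zeta)\,\frac{t^{2\rho-1}}{(1+t^{2\rho})^2}\,dt
= \int_0^\infty \frac{s(x, v_\zeta)}{x}\int_{\sqrt{x}}^\infty \frac{t^{2\rho-1}}{(1+t^{2\rho})^2}\,dt\,dx.
\end{equation*}
The inner $t$-integral reduces to $1/(2\rho(1+x^\rho))$ via $w = t^{2\rho}$, so multiplying by the prefactor $4\rho^2$ of \eqref{vestimate} produces \eqref{vestimate2}. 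The main technical point I need to monitor is the finiteness of the right-hand side of \eqref{vestimate}, on which both Fubini applications rest; this is secured by the averaged growth bound \eqref{equivL} together with the normalization $T(t,v) \leq t^\rho$.
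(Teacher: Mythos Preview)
Your proof is correct and follows essentially the same route as the paper's: represent the Nevanlinna characteristic via Lemma~\ref{inc} and transcribe \eqref{equivL} and \eqref{vestimate} through the substitution $\tau=t^2$. The only cosmetic differences are that you apply Lemma~\ref{inc} directly to $\tilde T(\tau)=T(\sqrt{\tau},v_\zeta)$ (whereas the paper applies it to $T(\cdot,v_\zeta)$ and then relabels $s$ at the end) and you use Tonelli in place of the paper's integration by parts for \eqref{vestimate2}; both land on the same function $s$ and the same two inequalities.
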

\begin{proof}
	Note that $T(t, v_{\zeta})$ is nondecreasing on $[0,+\infty)$ and convex in $\log t$. Then a direct application of Lemma \ref{inc} to $T(t, v_{\zeta})$ yields
	\begin{equation*}
	T(t,v_{\zeta})=\int^t_0\frac{s(x,v_{\zeta})}{x}\,dx\quad t\geq0
	\end{equation*}
	for some increasing function $s(\cdot,v_{\zeta})$. Using this representation we get 
	\begin{align*}
	G[T(\cdot,v_{\zeta})](t)=\int^t_0uT(u,v_{\zeta})\,du=\int^t_0u\Big(\int^u_0\frac{s(x,v_{\zeta})}{x}\,dx\Big)\,du=\frac{1}{2}\int^t_0(t^2-u^2)\frac{s(u,v_{\zeta})}{u}\,du.
	\end{align*}
	From \eqref{equivL} and a change of variables $ut\equiv u$ implies
	\begin{equation*}
	\int^1_0(1-u^2)^{n-1}\frac{s(tu,v_{\zeta})}{u}\,du\leq T(t,v).
	\end{equation*}
	Substituting $u\equiv u^2$ and $2s(t^2u^2,v_{\zeta})\equiv s(tu,v_{\zeta})$ gives the final form 
	\begin{equation*}
	\label{equivfinal}
	\int^1_0(1-u)^{n-1}\frac{s(tu,v_{\zeta})}{u}\,du\leq T(t^{1/2},v)\leq t^{\rho/2}	.
	\end{equation*}
	This proves inequality \eqref{eq:growth}.
	On the other hand an integration by parts 
	\begin{equation*}
	\int^{+\infty}_0T(t,v_{\zeta})\varphi_{\mu}(t)\,dt= \int^{+\infty}_0\Big(\int^t_0\frac{s(x,v_{\zeta})}{x}\,dx\Big)\varphi_{\mu}(t)\,dt=\frac{1}{2\mu}\int^{+\infty}_0\frac{s(t,v_{\zeta})}{t}\frac{1}{1+t^{2\mu}}\,dt
	\end{equation*}
	together with $2s(t^2,v_{\zeta})\equiv s(t,v_{\zeta})$
	imply for \eqref{vestimate} the equivalent estimate 
	\begin{equation*}
	 v_{\zeta}(1)\leq 2\mu\int^{+\infty}_0\frac{s(t,v_{\zeta})}{t}\frac{1}{1+t^{\mu}}\,dt.
	\end{equation*}
	Since $\rho>1$ then $\mu=\rho$. This proves inequality \eqref{vestimate2}.
\end{proof}
Let $\psh_{\rho}(\mathbb C^n)$ denote the set of nonnegative functions $v$ that are psh in $\mathbb C^n$ such that inequality $T(r,v)\leq t^{\rho}$ is satisfied. Let $\inc_{\rho}(\mathbb R_+)$ be the set of nonnegative and increasing functions on $\mathbb R_+$ satisfying integral inequality \eqref{eq:growth}. Note that  $$M(1,v)=\max\{v(z)\;:\;z\in\mathbb C^n, |z|=1 \}=\max\{v_{\zeta}(1)\;:\;\zeta\in\mathbb S^n\}$$ then by Proposition \ref{p:s} for any psh function $v$ with lower order $\rho>0$ it holds that
$$M(1,v)\leq 2\rho\sup_{v\in\psh_{\rho}(\mathbb C^n)}\int^{+\infty}_0\frac{s(t,v_{\zeta})}{t}\frac{1}{1+t^{\rho}}\,dt= 2\rho \sup_{s\in\inc_{\rho}(\mathbb R_+)}\int^{+\infty}_0\frac{s(t)}{t}\frac{1}{1+t^{\rho}}\,dt.$$
The last equality follows  from the correspondence stated in Lemma \ref{inc} and the fact that to each increasing function $T(r)$ that is convex in $\log r$ one can associate a psh function $v$ in $\mathbb C^n$ such that $T(r)=T(r,v)$ i.e. take $v(z)\coloneqq T(|z|)$.
For a given $s\in\inc_{\rho}(\mathbb R_+)$ denote the last integral by $J_{\rho}(s)$  and $J(\rho)\coloneqq\sup_{s\in\inc_{\rho}(\mathbb R_+)}J_{\rho}(s)$.

\section{Boundedness of the functional $J_{\rho}$}
\subsection{An upper estimate for $J_{\rho}$}
	Let $Q$ be the operator acting on a integrable function $f$ by the formula
	\begin{equation*}
	Q[f](t)\coloneqq\int^t_0f(x)\,dx\quad t\geq 0.
	\end{equation*}
	To operator $Q$ corresponds an operator $Q^{-1}$, the inverse operator of $Q$, which acts on a differentiable function $f$ by the formula 
	\begin{equation*}
	Q^{-1}[f](t)\coloneqq\frac{df(t)}{dt}.
	\end{equation*}
A repeated integration yields the well known Cauchy integral identity
\begin{equation*}
Q^k[f](t)=\frac{1}{(k-1)!}\int^t_0(t-x)^{k-1}f(x)\,dx\quad\forall k\in\mathbb{N}.
\end{equation*}
In view of this operator we have
\begin{equation}
\label{eq:equiv}
\int^1_0(1-u)^{n-1}\frac{s(tu)}{u}\,du=\frac{1}{t^{n-1}}\int^t_0(t-u)^{n-1}\frac{s(x)}{x}\,du=\frac{(n-1)!}{t^{n-1}}Q^n\Big[\frac{s(x)}{x}\Big](t).
\end{equation}
	For $\rho>1$ let $\varphi_{\rho}:\mathbb{R}_+\to\mathbb{R}_+$ be the function
	\begin{equation}
	\label{varphi}
	\varphi_{\rho}(t)\coloneqq\frac{1}{1+t^{\rho}}\quad t\geq0.
	\end{equation}

\begin{lemma}
\label{twoest}
For all $k=0,1,2,...$ the followings hold
\begin{align}
  & |(Q^{-1})^{k}[\varphi_{\rho}](t)|=O(t^{\rho-k})\quad\text{as}\quad t\to0\label{first}\\&
   |(Q^{-1})^k[\varphi_{\rho}](t)|=O(t^{-\rho-k})\quad\text{as}\quad t\to+\infty\label{second}
  \end{align}
\end{lemma}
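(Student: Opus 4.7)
The plan is to observe that $(Q^{-1})^k[\varphi_\rho]$ is nothing other than the $k$-th classical derivative $\varphi_\rho^{(k)}$, since $Q$ is Lebesgue integration from $0$ to $t$ and $Q^{-1}$ is differentiation. Because $\varphi_\rho(t)=(1+t^\rho)^{-1}$ is smooth on $(0,+\infty)$, all derivatives exist, and I would establish by induction on $k$ a closed structural representation of $\varphi_\rho^{(k)}$ from which both asymptotic bounds drop out transparently. The cases $k=0$ are immediate ($|\varphi_\rho(t)|\leq 1$ near $0$ and $|\varphi_\rho(t)|\leq t^{-\rho}$ for $t\geq 1$), so the substantial content is $k\geq 1$.

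The inductive claim is that for every $k\geq 1$,
\begin{equation*}
\varphi_\rho^{(k)}(t)\;=\;t^{-k}\sum_{j=1}^{k}\frac{P_{k,j}(t^\rho)}{(1+t^\rho)^{j+1}},
\end{equation*}
where each $P_{k,j}$ is a polynomial of degree at most $j$ satisfying $P_{k,j}(0)=0$. The base case $k=1$ is the identity $\varphi_\rho'(t)=-\rho t^{\rho-1}(1+t^\rho)^{-2}$, fitting the template with $P_{1,1}(u)=-\rho u$. For the inductive step, differentiating a generic summand via the product, quotient, and chain rules produces three contributions: from the prefactor $t^{-k}$, from $P_{k,j}(t^\rho)$ (which pulls out a factor $\rho t^{\rho-1}$), and from $(1+t^\rho)^{-(j+1)}$. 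Writing $u=t^\rho$, the first two combine into the polynomial $-kP_{k,j}(u)+\rho u P_{k,j}'(u)$ over $(1+u)^{j+1}$ (still of degree $\leq j$ and vanishing at $0$), while the third contributes $-(j+1)\rho u P_{k,j}(u)$ over $(1+u)^{j+2}$ (of degree $\leq j+1$ and vanishing at $0$). Regrouping by denominator power preserves the template at level $k+1$.

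From this representation both estimates follow at once. As $t\to 0^+$ we have $(1+t^\rho)^{-(j+1)}\to 1$ and $P_{k,j}(t^\rho)=O(t^\rho)$ since $P_{k,j}(0)=0$, so every summand contributes $O(t^{-k}\cdot t^\rho)=O(t^{\rho-k})$ and the finite sum is of the same order. As $t\to+\infty$ we have $(1+t^\rho)^{-(j+1)}=O(t^{-(j+1)\rho})$ and $P_{k,j}(t^\rho)=O(t^{j\rho})$, so each summand contributes $O(t^{-k+j\rho-(j+1)\rho})=O(t^{-\rho-k})$, as required.

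The main obstacle is purely bookkeeping in the induction — checking that each of the three contributions lands at the correct denominator power and that both the degree bound and the vanishing-at-zero property of the polynomials $P_{k,j}$ propagate. An alternative that avoids this bookkeeping is to expand $\varphi_\rho$ in the convergent series $\sum_{j\geq 0}(-1)^j t^{j\rho}$ near $0$ and $\sum_{j\geq 0}(-1)^j t^{-(j+1)\rho}$ near infinity, and differentiate term by term; the leading exponents are then read off directly as $\rho-k$ and $-\rho-k$ respectively. I would keep the structural induction as the primary proof and use the series expansion as an independent consistency check.
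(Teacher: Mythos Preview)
Your proposal is correct. The paper's proof, however, is precisely your ``alternative'' route: it consists of two sentences invoking the expansions $\varphi_\rho(t)=1-t^\rho+t^{2\rho}-\cdots$ near $0$ and $\varphi_\rho(t)=t^{-\rho}-t^{-2\rho}+\cdots$ near $\infty$, and reads off the asymptotics of the $k$-th derivative from the leading nonconstant term. So what you relegate to a consistency check is in fact the paper's entire argument.

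Your primary route---the inductive structural representation
\[
\varphi_\rho^{(k)}(t)=t^{-k}\sum_{j=1}^k \frac{P_{k,j}(t^\rho)}{(1+t^\rho)^{j+1}}
\]
with $\deg P_{k,j}\le j$ and $P_{k,j}(0)=0$---is a genuinely different and more explicit approach. It yields a closed-form description of the derivatives, which is in the spirit of what the paper later needs when analysing $\psi_\rho=(-1)^n\varphi_\rho^{(n)}$ and its zeros (indeed, a rational representation of exactly this type is quoted from \cite{AB3} in the final section). The cost is the bookkeeping you flag; the paper's series expansion is shorter and suffices for the bare asymptotic claim, but carries no such structural information.

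One small remark: for $k=0$ the first bound as literally stated, $|\varphi_\rho(t)|=O(t^{\rho})$ as $t\to 0$, is false since $\varphi_\rho(0)=1$; your observation $|\varphi_\rho|\le 1$ gives only $O(1)$. This is a defect of the lemma's statement (shared by the paper) rather than of your argument, and it is harmless downstream: in Lemma~\ref{vanishing} one only needs the product with $Q[s(x)/x](t)=O(t^{\rho/2})$ to vanish at $0$, which $O(1)$ already delivers.
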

\begin{proof}
 Relation \eqref{first} follows from the power series expansion $\varphi_{\rho}(t)=1-t^{\rho}+t^{2\rho}-...$ as $t\to0$. Similarly \eqref{second} follows from the expansion $\varphi_{\rho}(t)=t^{-\rho}-t^{-2\rho}+...$ as $t\to+\infty$.
\end{proof}

\begin{lemma}
	\label{l:Qineq}
	For any $s\in\inc_{\rho}(\mathbb R_+)$ we have the inequalities
	\begin{equation}
		\label{eq:Qineq}
		Q^{k+1}\Big[\frac{s(x)}{x}\Big](t)\leq \frac{1}{k!}t^{\rho/2+k}\quad k=0,1,2,...
	\end{equation}
\end{lemma}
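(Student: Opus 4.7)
The plan is to anchor the induction at the value of $k$ where \eqref{eq:growth} translates directly into an operator bound and then propagate outward.

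The natural anchor is $k = n-1$. By the identity \eqref{eq:equiv}, the defining inequality \eqref{eq:growth} of $\inc_\rho(\mathbb R_+)$ reads
\begin{equation*}
Q^n\!\left[\frac{s(x)}{x}\right](t) \leq \frac{t^{n-1+\rho/2}}{(n-1)!},
\end{equation*}
which is precisely \eqref{eq:Qineq} for $k = n-1$.

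For the forward step $k \to k+1$ with $k \geq n-1$, I would integrate both sides of the assumed bound on $[0,t]$:
\begin{equation*}
Q^{k+2}\!\left[\frac{s(x)}{x}\right](t) = \int_0^t Q^{k+1}\!\left[\frac{s(x)}{x}\right](u)\,du \leq \frac{1}{k!}\int_0^t u^{\rho/2+k}\,du = \frac{t^{\rho/2+k+1}}{(\rho/2+k+1)\,k!}.
\end{equation*}
Since $\rho>0$ gives $\rho/2+k+1 \geq k+1$, the bound sharpens to $t^{\rho/2+k+1}/(k+1)!$, closing the induction for every $k \geq n-1$.

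The principal obstacle is the remaining range $0 \leq k < n-1$, which the forward induction cannot reach. Here my plan is to leverage the monotonicity of $s$: restricting the integral in \eqref{eq:growth} to a subinterval $[\alpha t, t]$ with $\alpha \in (0,1)$ fixed and bounding the integrand below via $s(x) \geq s(\alpha t)$ produces a pointwise control of the form $s(t) \leq C_\alpha\, t^{\rho/2}$. Substituting this into the Cauchy-form representation $Q^{k+1}[s(x)/x](t) = \tfrac{1}{k!}\int_0^t (t-x)^k\, s(x)/x\,dx$ and evaluating the resulting Beta integral yields a bound of the correct polynomial order $t^{\rho/2+k}$. The genuinely delicate point will be tuning $\alpha$, or coupling this pointwise bound with the integral hypothesis via a bootstrapping argument, so that the multiplicative constant collapses to exactly $1/k!$ as claimed; this sharpness in the small-$k$ regime is the main analytical obstacle.
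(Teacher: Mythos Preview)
Your anchor at $k=n-1$ and the forward induction for $k\ge n-1$ are both correct, and the anchor is precisely what the paper's one-line proof records: combining \eqref{eq:equiv} with the defining inequality \eqref{eq:growth} gives $Q^{n}[s(x)/x](t)\le t^{\rho/2+n-1}/(n-1)!$, and nothing more. The paper does not address any other value of $k$.

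Your caution about the range $0\le k<n-1$ is not merely justified --- it points to a genuine defect in the statement: the inequality with the constant $1/k!$ is \emph{false} there. Take $n=2$, $\rho=2$, and $s(x)=2x$. Then $\int_0^1(1-u)\,s(tu)/u\,du=t=t^{\rho/2}$, so $s\in\inc_\rho(\mathbb R_+)$, yet $Q^{1}[s(x)/x](t)=\int_0^t 2\,dx=2t>t^{\rho/2}$. No choice of $\alpha$ in your monotonicity scheme can force the constant down to $1/k!$, because that constant is simply unattainable for $k<n-1$.

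What you can salvage --- and what the paper actually requires --- is the weaker assertion $Q^{k+1}[s(x)/x](t)=O(t^{\rho/2+k})$ for $0\le k<n-1$. Your pointwise estimate $s(t)\le C_\alpha\, t^{\rho/2}$, obtained by restricting \eqref{eq:growth} to $[\alpha,1]$ and using monotonicity of $s$, feeds directly into the Cauchy formula to produce exactly this. That is all Lemma~\ref{vanishing} uses; the only places the sharp constant is invoked downstream are Propositions~\ref{p:J} and~\ref{p:notatt}, and there solely at $k=n-1$. So the lemma is overstated, but your argument already supplies everything the subsequent proofs rely on.
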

\begin{proof}
	Follows immediately from \eqref{eq:equiv} and Proposition \ref{p:s}.
\end{proof}

\begin{lemma}
 \label{vanishing}
 For all $k=0,1,2,...$ the following limits hold
 \begin{equation}
  \label{vanishinglimits}
  \lim_{t\to 0,+\infty}|(Q^{-1})^k[\varphi_{\rho}](t)|\cdot Q^{k+1}\Big[\frac{s(x)}{x}\Big](t)=0
 \end{equation}
\end{lemma}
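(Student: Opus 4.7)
The plan is to read off both limits directly from the pointwise estimates collected in Lemmas \ref{twoest} and \ref{l:Qineq}. Since each factor in the product is controlled by an explicit power of $t$, everything reduces to power counting, and the powers of $t$ depending on $k$ cancel between the two factors, so that the vanishing holds simultaneously for every $k=0,1,2,\dots$

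First I treat the limit at $t\to 0$. By Lemma \ref{twoest}, $|(Q^{-1})^k[\varphi_{\rho}](t)|=O(t^{\rho-k})$, while Lemma \ref{l:Qineq} gives $Q^{k+1}[s(x)/x](t)\leq \tfrac{1}{k!}\, t^{\rho/2+k}$. Multiplying these two bounds, the product is of order $t^{(\rho-k)+(\rho/2+k)}=t^{3\rho/2}$, and since $\rho>1>0$ this vanishes as $t\to 0$. For $t\to+\infty$ I proceed symmetrically: the second estimate of Lemma \ref{twoest} gives $|(Q^{-1})^k[\varphi_{\rho}](t)|=O(t^{-\rho-k})$, and combined with the same bound from Lemma \ref{l:Qineq} the product is of order $t^{(-\rho-k)+(\rho/2+k)}=t^{-\rho/2}$, which likewise vanishes since $\rho>0$.

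There is no real obstacle here --- all the substantive work was already done in establishing Lemmas \ref{twoest} and \ref{l:Qineq}, which are calibrated precisely so that the $k$-dependence cancels in the product. The net exponent $3\rho/2$ at the origin and $-\rho/2$ at infinity is independent of $k$, and it is this cancellation that makes the lemma usable in the iterated integration-by-parts argument against $\varphi_{\rho}$ without producing any boundary contributions.
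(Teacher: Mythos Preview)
Your proof is correct and follows essentially the same approach as the paper: both combine the asymptotics of Lemma~\ref{twoest} with the growth bound of Lemma~\ref{l:Qineq}, observe that the $k$-dependent exponents cancel, and obtain the product estimates $O(t^{3\rho/2})$ as $t\to 0$ and $O(t^{-\rho/2})$ as $t\to+\infty$. No substantive difference.
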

 \begin{proof}
By Lemma \ref{l:Qineq} it follows
$$Q^{k+1}\Big[\frac{s(x)}{x}\Big](t)=O(t^{\rho/2+k})$$
By Lemma \ref{twoest} 
\begin{align}
 \label{asymptotic} 
 |(Q^{-1})^k[\varphi_{\rho}](t)|\cdot Q^{k+1}\Big[\frac{s(x)}{x}\Big](t)= \left\{
      \begin{array}{lr}
       \displaystyle O(t^{3\rho/2}) & : \text{as}\quad t\to 0\\
       O(t^{-\rho/2}) & : \text{as}\quad t\to+\infty
\end{array}
    \right.
 \end{align} 
 The limits in \eqref{vanishinglimits} follow immediately for all $k=0,1,2,...$. 
\end{proof}
\begin{mydef}
	\label{d:psi}
	For $\rho>1$ let $\psi_{\rho}:\mathbb{R}\to \mathbb{R}$ be defined as  
	\begin{equation}
	\label{psi}
	\psi_{\rho}(t)\coloneqq(-1)^n(Q^{-1})^n[\varphi_{\rho}](t)\quad t\geq0.
	\end{equation}
	Furthermore let $D_+\coloneqq\{t\in \mathbb{R}_+: \psi_{\rho}(t)\geq 0\}$ and $D_-\coloneqq\{t\in \mathbb{R}_+: \psi_{\rho}(t)< 0\}$.
\end{mydef}

\begin{proposition}
	\label{p:D}
If $\rho>1$ then for any $n\geq 2$  $\psi_{\rho}(t)$ vanishes at most on a finite number of points. In particular $\psi_{\rho}(t)>0$ for all large enough $t>0$.
\end{proposition}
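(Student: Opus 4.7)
The plan is to show that, up to an explicit positive factor on $(0,+\infty)$, $\psi_\rho$ equals a polynomial in $t^\rho$ of degree exactly $n$; then the fundamental theorem of algebra gives the finiteness bound and a sign check at $+\infty$ gives the eventual positivity. The starting point is the elementary identity $(Q^{-1})^n[\varphi_\rho]=\varphi_\rho^{(n)}$, so $\psi_\rho(t)=(-1)^n\varphi_\rho^{(n)}(t)$.

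First I would establish, by induction on $n\geq 0$, the representation
\[
\varphi_\rho^{(n)}(t)=\frac{R_n(t^\rho)}{t^n(1+t^\rho)^{n+1}},
\]
with $R_0\equiv 1$. Applying the quotient rule and using $\tfrac{d}{dt}t^\rho=\rho t^{\rho-1}$ gives the recurrence
\[
R_{n+1}(s)=\rho\,s(1+s)\,R_n'(s)-\bigl[\,n+(n+(n+1)\rho)\,s\,\bigr]R_n(s),
\]
which manifestly preserves polynomiality. Tracking leading coefficients through this recurrence, one checks that $\deg R_n=n$ with leading coefficient $(-1)^n\rho(\rho+1)\cdots(\rho+n-1)$; this is also consistent with the asymptotic $\varphi_\rho(t)\sim t^{-\rho}$ as $t\to+\infty$, which forces $\varphi_\rho^{(n)}(t)\sim(-1)^n\rho(\rho+1)\cdots(\rho+n-1)\,t^{-\rho-n}$.

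Once the representation is in hand, the proposition is immediate. On $(0,+\infty)$ the factor $t^n(1+t^\rho)^{n+1}$ is strictly positive, so the zeros of $\psi_\rho$ correspond, via $s=t^\rho$, to positive real roots of the polynomial $R_n$. Being a nonzero polynomial of degree $n$, $R_n$ has at most $n$ real roots, and hence $\psi_\rho$ vanishes at most $n$ times on $(0,+\infty)$; in particular its zero set is finite. The leading coefficient of $(-1)^n R_n$ is the strictly positive number $\rho(\rho+1)\cdots(\rho+n-1)$, so $(-1)^n R_n(s)>0$ for all sufficiently large $s$, giving $\psi_\rho(t)>0$ for every $t$ larger than the largest real root of $s\mapsto R_n(s)$.

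The one real obstacle is verifying that the inductive step does not accidentally collapse the degree. The leading coefficients satisfy $a_{n+1}=-(\rho+n)\,a_n$, so the degree is bumped up by exactly one provided $\rho+n\neq 0$, which is automatic for $\rho>1$. Once this bookkeeping is carried out, the remainder is routine polynomial algebra and an asymptotic sign check.
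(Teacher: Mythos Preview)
Your argument is correct and is essentially the same approach that underlies the paper's proof: the paper simply cites \cite[Proposition 3.2]{AB3} for the finiteness of zeros and the eventual sign, and that cited result rests on exactly the polynomial representation you derive (the paper itself records the formula $\psi_{\rho}(t)=(-1)^n p(t^{\rho})/[t^n(1+t^{\rho})^{n+1}]$ later in Section~4.3, again attributing it to \cite{AB3}). The only difference is that you carry out the induction and the leading-coefficient bookkeeping explicitly, making the proof self-contained rather than a citation; your recurrence $R_{n+1}(s)=\rho s(1+s)R_n'(s)-[n+(n+(n+1)\rho)s]R_n(s)$ and the resulting $a_{n+1}=-(\rho+n)a_n$ are exactly right, and the conclusion follows as you say.
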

\begin{proof}
By \cite[Proposition 3.2]{AB3} the equation $(Q^{-1})^n[\varphi_{\rho}](\tau)=0$ has a solution for all $n\geq 2$ and moreover there are at most a finite number of them. If $\sign(\cdot)$ denotes the usual sign function taking value $1$ on positive numbers, $-1$ on negative numbers and $0$ otherwise, then the set $\Omega\subseteq\mathbb{R}$ on which $\sign(Q^{-1})^n[\varphi_{\rho}](t)=(-1)^{n+1}$ can be expressed as a finite union of open bounded intervals in $\mathbb{R}_+$. In particular $\sign(Q^{-1})^n[\varphi_{\rho}](t)=(-1)^{n}$ for all sufficiently large $t$. By Definition \ref{d:psi} it follows that $\psi_{\rho}$ also vanishes at most on a finite number of points in $\mathbb{R}_+$ and  $$\sign\psi_{\rho}(t)=(-1)^n\sign(Q^{-1})^n[\varphi_{\rho}](t)=(-1)^n\cdot(-1)^{n+1}=-1\quad \forall t\in\Omega.$$ In particular $D_-=\Omega$ and $\psi_{\rho}(t)>0$ for all large enough $t$.
\end{proof}

\begin{remark}
	Note that from the arguments in the proof of the last proposition it follows that $D_-$ is a bounded set, in particular the closure of $D_-$ is compact. 
\end{remark}

\begin{mydef}
	\label{d:Phi}
Let $0\leq \tau_1\leq \tau_2\leq...\leq\tau_m<+\infty$ be an enumeration of the zeros of $\psi_{\rho}$. Define the index set $I\coloneqq\{i\in\mathbb{N}: \psi_{\rho}(t)<0,\forall t\in(\tau_{i-1},\tau_i)\}$. Denote by
\begin{equation}
\label{Phi1}
\Phi_{\rho, k}(t)\coloneqq(-1)^{n+k}\frac{\Gamma(\rho/2+n)}{\Gamma(n)\Gamma(\rho/2+n-k)}t^{\rho/2+n-k-1}\varphi^{(n-k)}_{\rho}(t)\quad\text{for}\quad k=0,1,...,n-1
\end{equation}
and
\begin{equation}
\label{Phi2}
\Phi_{\rho,n}(t)\coloneqq\frac{\Gamma(\rho/2+n)}{\Gamma(n)\Gamma(\rho/2+1)}\arctan t^{\rho}.
\end{equation}
\end{mydef}

\begin{proposition}
	\label{p:J}
For any $s\in\inc_{\rho}(\mathbb R_+)$ the following inequality holds:
\begin{align}
\label{eq:J} 
J_{\rho}(s)\leq 
\displaystyle\frac{P_n(\rho)}{2\rho} + \sum_{i\in I}\sum_{k=0}^n\Big(\Phi_{\rho, k}(\tau_{i-1})-\Phi_{\rho, k}(\tau_{i})\Big).
\end{align} 
\end{proposition}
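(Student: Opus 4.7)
The idea is to perform $n$ successive integrations by parts on $J_\rho(s) = \int_0^{+\infty}\frac{s(t)}{t}\varphi_\rho(t)\,dt$, transferring all derivatives off $s(t)/t$ and onto $\varphi_\rho$, and then to exploit the sign structure of the resulting weight $\psi_\rho$ together with the polynomial bound on $Q^n[s(x)/x]$ furnished by Lemma \ref{l:Qineq}. Iterating the identity $\int_0^{+\infty} f g\,dt = [Q[f]\,g]_0^{+\infty} - \int_0^{+\infty} Q[f]\,Q^{-1}[g]\,dt$ a total of $n$ times with $f(x) = s(x)/x$ and $g = \varphi_\rho$, and noting that every one of the $n$ intermediate boundary terms $\bigl[Q^{k+1}[s/x]\cdot(Q^{-1})^k[\varphi_\rho]\bigr]_0^{+\infty}$ vanishes by Lemma \ref{vanishing}, we obtain
\[
J_\rho(s) = (-1)^n\int_0^{+\infty} Q^n\!\left[\frac{s(x)}{x}\right]\!(t)\,(Q^{-1})^n[\varphi_\rho](t)\,dt = \int_0^{+\infty} Q^n\!\left[\frac{s(x)}{x}\right]\!(t)\,\psi_\rho(t)\,dt,
\]
where the second equality uses $\psi_\rho = (-1)^n(Q^{-1})^n[\varphi_\rho]$ from Definition \ref{d:psi}.

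Since $s \geq 0$, the iterated Cauchy integral $Q^n[s/x]$ is also nonnegative on $[0,+\infty)$. On $D_-$ the factor $\psi_\rho$ is strictly negative, so the integrand is nonpositive there and may be discarded. On $D_+$ the factor $\psi_\rho$ is nonnegative, and Lemma \ref{l:Qineq} (with $k = n-1$) supplies the pointwise bound $Q^n[s/x](t) \leq t^{\rho/2+n-1}/(n-1)!$. Combining these two observations yields
\[
J_\rho(s) \leq \frac{1}{(n-1)!}\int_{D_+} t^{\rho/2+n-1}\psi_\rho(t)\,dt = \frac{1}{(n-1)!}\int_0^{+\infty} t^{\rho/2+n-1}\psi_\rho(t)\,dt - \frac{1}{(n-1)!}\int_{D_-} t^{\rho/2+n-1}\psi_\rho(t)\,dt,
\]
and by Proposition \ref{p:D} the set $D_-$ is the finite disjoint union $\bigsqcup_{i\in I}(\tau_{i-1},\tau_i)$, so the $D_-$-integral splits into finitely many pieces whose endpoints are consecutive zeros of $\psi_\rho$.

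The two remaining integrals are explicit. Writing $\psi_\rho = (-1)^n\varphi_\rho^{(n)}$ and integrating $\int t^{\rho/2+n-1}\varphi_\rho^{(n)}(t)\,dt$ by parts $n$ times, peeling off one derivative of $\varphi_\rho$ at each step, produces an antiderivative of $\frac{1}{(n-1)!}t^{\rho/2+n-1}\psi_\rho(t)$ built from the boundary-type functions $\Phi_{\rho,0},\ldots,\Phi_{\rho,n-1}$ of Definition \ref{d:Phi} together with $\Phi_{\rho,n}$ (the latter arising from $\int t^{\rho/2-1}\varphi_\rho(t)\,dt$ via the substitution $u = t^{\rho/2}$). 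Evaluated on all of $[0,+\infty)$, the polynomial boundary terms vanish at both endpoints (by Lemma \ref{twoest} and the asymptotics of $\varphi_\rho$), leaving the beta-type value $\int_0^{+\infty} t^{\rho/2-1}\varphi_\rho(t)\,dt = \pi/\rho$; collapsing the Gamma factors via $\Gamma(\rho/2+n)/\Gamma(\rho/2) = (\rho/2)\prod_{k=1}^{n-1}(\rho/2+k)$ identifies $\frac{1}{(n-1)!}\int_0^{+\infty}t^{\rho/2+n-1}\psi_\rho(t)\,dt$ with $P_n(\rho)/(2\rho)$. On each component $(\tau_{i-1},\tau_i) \subset D_-$ the same antiderivative produces the telescoping difference $\sum_{k=0}^n\bigl(\Phi_{\rho,k}(\tau_{i-1}) - \Phi_{\rho,k}(\tau_i)\bigr)$, and summing over $i \in I$ assembles \eqref{eq:J}.

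The main obstacle is the careful bookkeeping in this last step: one has to track alternating signs, a tower of Gamma-ratios, and an index shift produced by the $n$-fold integration by parts in order to recognise that the resulting antiderivative has exactly the form dictated by Definition \ref{d:Phi}; once this identification is made, the inequality follows at once from the pointwise estimate on $Q^n[s/x]$ together with the sign of $\psi_\rho$ on the complementary sets $D_\pm$.
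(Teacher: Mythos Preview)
Your proof is correct and follows essentially the same route as the paper: an $n$-fold integration by parts (justified by Lemma~\ref{vanishing}) converts $J_\rho(s)$ into $\int_0^{+\infty}Q^n[s/x]\,\psi_\rho\,dt$, the nonpositive $D_-$-contribution is dropped, Lemma~\ref{l:Qineq} bounds the $D_+$-integrand, and the resulting integral is split as $\int_0^{+\infty}-\int_{D_-}$ and evaluated via repeated integration by parts to yield $P_n(\rho)/(2\rho)$ and the $\Phi_{\rho,k}$-terms respectively. The only cosmetic difference is that you spell out the Gamma-function bookkeeping and the $u=t^{\rho/2}$ substitution more explicitly than the paper does.
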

\begin{proof}
	Let $\rho>1$ then for any $s\in\inc_{\rho}(\mathbb R_+)$ by \eqref{psi} and Lemma \ref{vanishing} we have
	\begin{align*}
	J_{\rho}(s)=\int^{+\infty}_0\frac{s(t)}{t}\varphi_{\rho}(t)\,dt=\int_0^{+\infty}Q^n\Big[\frac{s(x)}{x}\Big](t)\psi_{\rho}(t)\,dt.
	\end{align*}
Since $s\in\inc_{\rho}(\mathbb R_+)$ by Lemma \ref{l:Qineq} we obtain
\begin{align*}
\int_0^{+\infty}Q^n\Big[\frac{s(x)}{x}\Big](t)\psi_{\rho}(t)\,dt&\leq \int_{D_+}Q^n\Big[\frac{s(x)}{x}\Big](t)\psi_{\rho}(t)\,dt\leq\frac{1}{(n-1)!}\int_{D_+}t^{\rho/2+n-1}\psi_{\rho}(t)\,dt. 
\end{align*}	
The last integral can be written as 
\begin{align*}
\frac{1}{(n-1)!}\int_{D_+}t^{\rho/2+n-1}\psi_{\rho}(t)\,dt=\frac{1}{(n-1)!}\int_0^{+\infty}t^{\rho/2+n-1}\psi_{\rho}(t)\,dt-\frac{1}{(n-1)!}\int_{D_-}t^{\rho/2+n-1}\psi_{\rho}(t)\,dt.
\end{align*}
An integration by parts and Lemma \ref{twoest} imply
\begin{align*}
\frac{1}{(n-1)!}\int_0^{+\infty}t^{\rho/2+n-1}\psi_{\rho}(t)\,dt&=\frac{\rho}{2}\prod^{n-1}_{k=1}\Big(1+\frac{\rho}{2k}\Big)\int^{+\infty}_0\frac{t^{\rho/2-1}}{1+t^{\rho}}\,dt\\&=\prod^{n-1}_{k=1}\Big(1+\frac{\rho}{2k}\Big)\int^{+\infty}_0\frac{ dt^{\rho/2}}{1+t^{\rho}}=\frac{\pi}{2}\prod^{n-1}_{k=1}\Big(1+\frac{\rho}{2k}\Big)=\frac{P_n(\rho)}{2\rho}.
\end{align*}
Successive integration by parts implies the following identity in indefinite form
$$\int t^{\rho/2+n-1}\psi_{\rho}(t)\,dt=\frac{\Gamma(\rho/2+n)}{\Gamma(\rho/2+1)}\arctan t^{\rho}+\sum_{k=0}^{n-1}(-1)^{n+k}\frac{\Gamma(\rho/2+n)}{\Gamma(\rho/2+n-k)}t^{\rho/2+n-k-1}\varphi^{(n-k)}_{\rho}(t).$$
Then using \eqref{Phi1} and \eqref{Phi2} we can write 
$$\frac{1}{(n-1)!}\int_{D_-}t^{\rho/2+n-1}\psi_{\rho}(t)\,dt= \sum_{i\in I}\sum_{k=0}^n\Big(\Phi_{\rho, k}(\tau_i)-\Phi_{\rho, k}(\tau_{i-1})\Big).$$
Altogether we have for $\rho>1$ that 
$$J_{\rho}(s)\leq \frac{1}{(n-1)!}\int_{D_+}t^{\rho/2+n-1}\psi_{\rho}(t)\,dt=\frac{P_n(\rho)}{2\rho} + \sum_{i\in I}\sum_{k=0}^n\Big(\Phi_{\rho, k}(\tau_{i-1})-\Phi_{\rho, k}(\tau_{i})\Big).$$
This completes the proof.
\end{proof}

\begin{remark}
	\label{r:compact}
	By a theorem of the author \cite[Theorem 1]{AB3} there exists $s\in\inc_{\rho}(\mathbb R^+)$ such that $J_{\rho}(s)>P_n(\rho)/2\rho$. Therefore it follows that 
	$$\sum_{i\in I}\sum_{k=0}^n\Big(\Phi_{\rho, k}(\tau_{i-1})-\Phi_{\rho, k}(\tau_{i})\Big)>0.$$
\end{remark}

\begin{proposition}
\label{p:notatt} 
There does not exist $s\in \inc_{\rho}(\mathbb R_+)$ satisfying $J_{\rho}(s)=J(\rho)$ whenever $\rho>1$.
 \end{proposition}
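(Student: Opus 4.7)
The plan is to argue by contradiction and locate the obstruction at a boundary zero of $\psi_{\rho}$. Suppose, for contradiction, that $s^*\in\inc_{\rho}(\mathbb{R}_+)$ satisfies $J_{\rho}(s^*)=J(\rho)$.

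The first step is to establish that $J(\rho)$ actually coincides with the upper bound in Proposition \ref{p:J}, i.e.\ $J(\rho)=P_n(\rho)/(2\rho)+\sum_{i\in I}\sum_{k=0}^{n}(\Phi_{\rho,k}(\tau_{i-1})-\Phi_{\rho,k}(\tau_i))$. This requires producing a sequence $(s_k)\subset\inc_{\rho}(\mathbb{R}_+)$ with $J_{\rho}(s_k)$ approaching this bound. A natural template is obtained by mollifying an (infeasible) function that vanishes on a neighbourhood of $\overline{D_-}$ and on $\overline{D_+}$ solves $Q^n[s(x)/x](t)=t^{\rho/2+n-1}/(n-1)!$, then regularising so as to respect the full integral constraint \eqref{eq:growth}.

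Given this sharpness, the equality $J_{\rho}(s^*)=J(\rho)$ forces every inequality in the proof of Proposition \ref{p:J} to be tight at $s^*$. The step discarding the integral $\int_{D_-}Q^n[s^*(x)/x](t)\psi_{\rho}(t)\,dt$—which is nonpositive because $Q^n[s^*(x)/x]\geq 0$ and $\psi_{\rho}<0$ on $D_-$—forces $Q^n[s^*(x)/x](t)=0$ on $D_-$, and by continuity of the Volterra iterate also on $\overline{D_-}$. The bound $Q^n[s^*(x)/x](t)\leq t^{\rho/2+n-1}/(n-1)!$ of Lemma \ref{l:Qineq} must similarly be tight wherever $\psi_{\rho}>0$ on $D_+$, and by continuity the equality $Q^n[s^*(x)/x](t)=t^{\rho/2+n-1}/(n-1)!$ extends to all of $D_+$.

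Evaluating both identities at $b\coloneqq\max\{\tau_i:i\in I\}$ yields the contradiction. By Remark \ref{r:compact} the set $I$ is non-empty, so $b$ is a strictly positive zero of $\psi_{\rho}$; by Proposition \ref{p:D} we have $(b,+\infty)\subset D_+$, placing $b\in\overline{D_-}\cap D_+$. The two identities at $t=b$ then give $0=Q^n[s^*(x)/x](b)=b^{\rho/2+n-1}/(n-1)!>0$, which is absurd. The principal obstacle is the opening step—the construction of a feasible extremal sequence that attains the upper bound in the limit; once that sharpness is available, the non-attainment is a clean consequence of the continuity of the Volterra iterate across the zero $b$ of $\psi_{\rho}$.
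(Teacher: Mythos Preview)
Your route differs from the paper's. The paper gives a self-contained perturbation argument that does not require knowing the value of $J(\rho)$: assuming a maximiser $s$ exists, set $f(t)=Q^n[s(x)/x](t)$ and replace $f$ on $D_-$ by $(1-\varepsilon\eta)f$, where $\eta$ is a smooth bump supported in $D_-$ whose derivatives up to order $n+1$ vanish at each $\tau_i$. For small $\varepsilon$ the resulting $s_0=t\,(Q^{-1})^n[f_0]$ is again nonnegative and increasing (this uses \cite[Theorem~3.1]{AB3}), and since $f_0\le f$ one still has $s_0\in\inc_\rho(\mathbb{R}_+)$; integration by parts then gives $J_\rho(s_0)=J_\rho(s)-\varepsilon\int_{D_-}f\,\eta\,\psi_\rho\,dt>J_\rho(s)$, contradicting maximality.

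Your second step---that equality throughout the chain in Proposition~\ref{p:J} forces $Q^n[s^*(x)/x]$ to vanish on $\overline{D_-}$ and to equal $t^{\rho/2+n-1}/(n-1)!$ on the closure of $\{\psi_\rho>0\}$, which is incompatible at the largest $\tau_i$---is correct. The gap is the first step: you need $J(\rho)$ to equal the upper bound, and you only gesture at a construction (``mollify an infeasible template that vanishes near $\overline{D_-}$''). This is precisely the content of Proposition~\ref{p:max}, which the paper proves \emph{after} Proposition~\ref{p:notatt}, and whose proof is itself built on the same perturbation mechanism, iterated: $f_k=(1-\varepsilon_k\eta)f_{k-1}$ on $D_-$ starting from $f_0(t)=t^{\rho/2+n-1}/(n-1)!$, driving $f_k\to0$ on $D_-$. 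So as written you have deferred the real work to a lemma you have not established, and whose natural proof uses the idea you are bypassing. If you do supply an independent proof of Proposition~\ref{p:max}, your deduction of non-attainment is valid and pleasantly short.
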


 \begin{proof}
 Suppose that there is some $s\in \inc_{\rho}(\mathbb R_+)$ with $J_{\rho}(s)=J(\rho)$. Since $s(x)$ is increasing then  
by Lebesgue's differentiation theorem \cite[Theorem 9.3.1]{Hasser} $s$ is differentiable almost everywhere on $\mathbb{R}_+$.
Let $W$ be the set of points for which $s$ is not differentiable.
Define the function $$f(t)=Q^n\Big[\frac{s(x)}{x}\Big](t),\;t\geq 0.$$ By Proposition \ref{p:D} for any $n\geq 2$ the set $D_-\neq\emptyset$ whenever $\rho>1$ and that $\psi_{\rho}(t)$ vanishes at most on a finite number of points $\tau\in \mathbb{R}_+$. 
For $\varepsilon\in(0,1)$ let
$f_0$ be defined piecewise as
\begin{align}
\label{pw}  
f_0(t) \coloneqq \left\{
     \begin{array}{lr}
       f(t)(1- \varepsilon\displaystyle\eta(t)) & : t\in D_-\\
        f(t) & : t\in D_+
      \end{array}
    \right.
 \end{align} 
 where $\eta:\mathbb{R}_+\to\mathbb{R}_+$ is given by 
  \begin{align}
 \label{torus1}
 \eta(t)=\sum_{i\in I}\eta_i(t)\qquad\text{and}\qquad\eta_i(t)=\left\{\begin{array}{lr}\Big[\cos\Big(\displaystyle\frac{\pi t}{2\tau_{i-1}}\Big)\cos\Big(\displaystyle\frac{\pi t}{2\tau_i}\Big)\Big]^{2n}& :t\in(\tau_{i-1},\tau_i]\\
 0& : t\notin(\tau_{i-1},\tau_i] 
 \end{array}\right. 
 \end{align}
 with the agreement that if $1\in I$ then $\eta_1(t)=\cos^{2n}\Big(\displaystyle\frac{\pi t}{2\tau_1}\Big)$ for all $t\in(0,\tau_1]$ and zero otherwise. Here the index set $I$ and the finite sequence $(\tau_i)$ is given by Definition \ref{d:Phi}.
 Note that $\eta$ is smooth, nonnegative with support in $D_-$ and uniformly bounded above by one.
 Moreover by definition of $\eta$ the following vanishing limits are fulfilled
\begin{equation}
 \label{vanish2}
 \lim_{t\to \tau_i^-}\frac{d^k}{dt^k}[f(t)\eta(t)]=0\qquad\text{and}\qquad \lim_{t\to \tau_{i-1}^+}\frac{d^k}{dt^k}[f(t)\eta(t)]=0
 \end{equation}
 for $k=1,...,n+1$ and for all $\tau_i\in\psi^{-1}_{\rho}(0)$. To $f_0$ we can associate $s_0$ defined piecewise
 \begin{align}
 \label{PW}
 s_0(t)=t(Q^{-1})^n[f_0](t)= \left\{
     \begin{array}{lr}
     s(t)- \displaystyle\varepsilon t(Q^{-1})^n\displaystyle [f(\cdot)\eta(\cdot)](t) & : t\in D_-\cap (\mathbb{R}_+\setminus W)\\
      s(t) & : t\in D_+\cap (\mathbb{R}_+\setminus W)
      \end{array}
    \right.
 \end{align}
 In view of \cite[Theorem 3.1]{AB3} it is possible to find an $\varepsilon\in(0,1)$ such that $s_0, s'_0\geq 0$ on $\mathbb{R}_+\setminus W$. And on $W$ we can arrange the values of $s_0$ so that it is nonnegative and preserves monotonicity on $\mathbb{R}_+$. Hence $s_0$ is a nonnegative increasing function.
By \eqref{pw} and Lemma \ref{l:Qineq} we have $$Q^n\Big[\frac{s_0(x)}{x}\Big](t)=f_0(t)\leq \frac{t^{\rho/2+n-1}}{(n-1)!}$$ 
 therefore $s_0\in \inc_{\rho}(\mathbb R_+)$. On the other hand since $W$ is at most countable, hence of Lebesgue measure zero, then $W$ does not contribute to the integral $J_{\rho}(s_0)$. This in turn implies
$$J_{\rho}(s_0)=\int_{\mathbb{R}_+\setminus W}\frac{s_0(t)}{t}\varphi_{\rho}(t)\,dt=\int^{+\infty}_0\frac{s_0(t)}{t}\varphi_{\rho}(t)\,dt$$
which together with representation \eqref{PW} yields
$$J_{\rho}(s_0)=\int^{+\infty}_0\frac{s(t)}{t}\varphi_{\rho}(t)\,dt-\varepsilon\int_{D_-}(Q^{-1})^n\displaystyle [f(\cdot)\eta(\cdot)](t)\varphi_{\rho}(t)\,dt.$$
 By Proposition \ref{p:D} it follows that $D_-=\bigcup_{i\in I}(\tau_{i-1},\tau_i)$. So we can write $\int_{D_-}=\sum_{i\in I}\int^{\tau_i}_{\tau_{i-1}}$. Integrating by parts in each term of this sum and using the vanishing limits \eqref{vanish2} implies
 \begin{align*}
  J_{\rho}(s_0)=\int^{+\infty}_0\frac{s(t)}{t}\varphi_{\rho}(t)\,dt-\varepsilon\int_{D_-} f(t)&\eta(t)\psi_{\rho}(t)\,dt\\&>\int^{+\infty}_0\frac{s(t)}{t}\varphi_{\rho}(t)\,dt=J_{\rho}(s).
 \end{align*}
However this contradicts that $J_{\rho}(s)$ is maximal.
 \end{proof}

As an immediate corollary we obtain:

\begin{corollary}
	There is no $v\in\psh_{\rho}(\mathbb C^n)$, $\rho>1$ such that $M(1,v)=J(\rho)$.
\end{corollary}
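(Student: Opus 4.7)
The plan is to derive this corollary by a direct reduction to the non-attainment result already proved for increasing functions in Proposition \ref{p:notatt}. Suppose, for contradiction, that some $v\in\psh_{\rho}(\mathbb{C}^n)$ saturates the bound obtained in the discussion following Proposition \ref{p:s}, i.e.\ $M(1,v)$ equals the supremum $J(\rho)$ (read, with the constant $2\rho$ either absorbed into the definition or carried along explicitly as $2\rho J(\rho)$; the chain of inequalities below collapses identically in either reading). Since $v$ is plurisubharmonic, hence upper semicontinuous, and the unit sphere $\mathbb{S}^n$ is compact, the quantity $M(1,v)=\sup\{v(\zeta):\zeta\in\mathbb{S}^n\}$ is actually attained at some $\zeta_0\in\mathbb{S}^n$, so that $v_{\zeta_0}(1)=v(\zeta_0)=M(1,v)$.

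Next I would apply Proposition \ref{p:s} along the complex line determined by $\zeta_0$, which produces a nonnegative increasing function $s_0:=s(\cdot,v_{\zeta_0})\in\inc_{\rho}(\mathbb{R}_+)$ satisfying
$$v_{\zeta_0}(1)\leq 2\rho\int_0^{+\infty}\frac{s_0(t)}{t}\frac{1}{1+t^{\rho}}\,dt=2\rho\,J_{\rho}(s_0).$$
Combining the saturation assumption with the definition of $J(\rho)$ as the supremum of $J_{\rho}$ over $\inc_{\rho}(\mathbb{R}_+)$ yields
$$2\rho J(\rho)=M(1,v)=v_{\zeta_0}(1)\leq 2\rho\,J_{\rho}(s_0)\leq 2\rho J(\rho),$$
so every inequality must be an equality; in particular $J_{\rho}(s_0)=J(\rho)$. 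This directly contradicts Proposition \ref{p:notatt}, which asserts that no element of $\inc_{\rho}(\mathbb{R}_+)$ attains the supremum $J(\rho)$ when $\rho>1$, and the corollary follows.

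The only genuinely substantive point is the attainment of $M(1,v)$ at a single direction $\zeta_0$, which is needed so that Proposition \ref{p:s} may be invoked pointwise to produce an element of $\inc_{\rho}(\mathbb{R}_+)$ that realises $J(\rho)$; this is where upper semicontinuity on the compact sphere is used. Beyond that the argument is a verbatim transfer, and no further analytic input is required.
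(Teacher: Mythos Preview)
Your argument is correct and is exactly the natural unpacking of what the paper leaves implicit: the corollary is stated immediately after Proposition~\ref{p:notatt} with the words ``As an immediate corollary we obtain,'' and no separate proof is given. Your reduction---attain $M(1,v)$ at some $\zeta_0\in\mathbb{S}^n$ by upper semicontinuity on a compact set, feed $\zeta_0$ into Proposition~\ref{p:s} to produce $s_0\in\inc_{\rho}(\mathbb{R}_+)$, and collapse the chain of inequalities to force $J_{\rho}(s_0)=J(\rho)$ in contradiction with Proposition~\ref{p:notatt}---is precisely the intended one, and your hedge about the constant $2\rho$ is appropriate given the paper's own inconsistency in the statement.
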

\bigskip

\subsection{Constructing a maximizing sequence}
In view of Proposition \ref{p:notatt} it is then desirable to construct a sequence $(s_k)_{k\in\mathbb N}\subseteq\inc_{\rho}(\mathbb R_+)$ such that $\lim_{k\to+\infty}J_{\rho}(s_k)=J(\rho)$. The method of proof in the last proposition provides us with a concrete method of constructing such a maximizing sequence. Let $s_0\in\inc_{\rho}(\mathbb R_+)$ and denote by $f_0(t)\coloneqq Q^n[s_0(x)/x](t)$. By \cite[Theorem 3.1]{AB3}, for each $k\in\mathbb N$ we can find a positive constant $\varepsilon_k\in(0,1)$ such that 
the sequence of functions 

\begin{align}
\label{fk} 
f_k(t)\coloneqq \left\{
\begin{array}{lr}
f_{k-1}(t) & : t\in D_+\\
(1-\varepsilon_k\eta(t))f_{k-1}(t) & : t\in D_-.
\end{array}
\right.
\end{align} 
yields a corresponding sequence $(s_k)_{k\in\mathbb N}\subset\inc_{\rho}(\mathbb R_+)$ by the formula $s_k(t)\coloneqq t(Q^{-1})^n[f_k](t)$. Here $\eta$ is the function given by \eqref{torus1}. We start with 

\begin{equation}
	\label{eq:s0}
	s_0(t)\coloneqq\frac{\rho}{2}\,\prod_{k=1}^{n-1}\Big(1+\frac{\rho}{2k}\Big)\,t^{\rho/2},\quad t\geq 0.
\end{equation}
It is evident that $s_0\in\inc_{\rho}(\mathbb R_+)$. To $s_0$ corresponds 

\begin{equation}
	\label{eq:f0}
	f_0(t)=\frac{1}{(n-1)!}\,t^{\rho/2+n-1},\quad t\geq 0.
\end{equation}
By the recursion formula $f_k(t)\coloneqq(1-\varepsilon_k\eta(t))f_{k-1}(t)$ we obtain 
 
\begin{equation}
\label{eq:fk}
f_k(t)=\prod_{i=1}^{k}(1-\varepsilon_i\,\eta(t))\cdot f_0(t),\quad t\in D_-,\, k=1,2,3,...
\end{equation}
and correspondingly

\begin{equation}
\label{eq:sk}
s_k(t)=t\,(Q^{-1})^n\Big[\prod_{i=1}^{k}(1-\varepsilon_i\,\eta)\cdot f_0\Big](t),\quad t\in D_-,\, k=1,2,3,...
\end{equation}
Expanding the product $\displaystyle \prod_{i=1}^{k}(1-\varepsilon_i\,\eta)$ and using linearity of the operator $Q^{-1}$ we obtain 

\begin{align}
\label{eq:sk1}
s_k(t)=s_0(t)-t\,(Q^{-1})^n[\eta\,f_0](t)\sum_{i=1}^k\varepsilon_i+t\,&(Q^{-1})^n[\eta^2\,f_0](t)\,\sum_{i<j}\varepsilon_i\,\varepsilon_j+...\\\nonumber&...+(-1)^kt\,(Q^{-1})^n[\eta^k\,f_0](t)\,\prod_{i=1}^{k}\varepsilon_i.
\end{align}
Next we claim that for sufficiently large $k$ we can take $\varepsilon_k\sim \alpha/k$ for some constant $\alpha>0$. From \eqref{eq:sk1} the condition $s_k\geq 0$ is equivalent to the inequality 

\begin{align}
\label{eq:sk2}
1\geq\frac{t\,(Q^{-1})^n[\eta\,f_0](t)}{s_0(t)}\sum_{i=1}^k\varepsilon_i-&\frac{t\,(Q^{-1})^n[\eta^2\,f_0](t)}{s_0(t)}\,\sum_{i<j}\varepsilon_i\,\varepsilon_j+...\\\nonumber&...+(-1)^{k+1}\frac{t\,(Q^{-1})^n[\eta^k\,f_0](t)}{s_0(t)}\,\prod_{i=1}^{k}\varepsilon_i.
\end{align}
By definitions of $f_0$ and $s_0$ it follows that each ratio function in the last expression is continuous. As a consequence by Remark \ref{r:compact} the absolute value of each one attains a certain maximum on the closure of $D_-$. Additionally for sufficiently large $k$ the last term
$$\Big|\frac{t\,(Q^{-1})^n[\eta^k\,f_0](t)}{s_0(t)}\Big|\sim C\,\frac{k!}{(k-n)!}$$
dominates the whole sum. Here $C>0$ is a certain constant possibly depending on $f_0,s_0,\eta$ and $n$. Without loss of generality assume that $\varepsilon_k\leq\varepsilon_{k-1}\leq...\leq\varepsilon_1$ then for large $k$ 
$$1\geq C\, \frac{k!}{(k-n)!}\prod_{i=1}^{k}\varepsilon_i\geq C\, \frac{k!}{(k-n)!}\,(\varepsilon_k)^k$$
whenever $\varepsilon_k\sim \alpha/k$ for a certain constant $\alpha>0$. Hence such a choice for $\varepsilon_k$ is possible. In particular we get that $\sum_{k\geq k'}^N\varepsilon_k\to+\infty$ as $N\uparrow+\infty$ for $k'$ itself sufficiently large. Then $\sum_{k=1}^{N}\varepsilon_k\to+\infty$ as $N\uparrow+\infty$. By standard arguments from the theory of infinite series it follows that $\prod_{i=1}^{k}(1-\varepsilon_{k}\eta(t))\to 0$ as $k\uparrow +\infty$ for each $t\in D_-$. In particular $\lim_{k\to+\infty}f_k(t)=0$ for every $t\in D_-$. 
These observations help us show the following result:

\begin{proposition}
	\label{p:max}
	Let $s_0, f_0$ be given by \eqref{eq:s0} and \eqref{eq:f0} respectively. Then the sequence $(s_k)_{k\in\mathbb N}\subset\inc_{\rho}(\mathbb R_+)$ as constructed above is a maximizing sequence i.e. $\lim_{k\to+\infty}J_{\rho}(s_k)=J(\rho)$.
	In particular it holds that
	\begin{equation}
	\label{eq:max-seq}
		J(\rho)=\displaystyle\frac{P_n(\rho)}{2\rho} + \sum_{i\in I}\sum_{k=0}^n\Big(\Phi_{\rho, k}(\tau_{i-1})-\Phi_{\rho, k}(\tau_{i})\Big).
	\end{equation} 

\end{proposition}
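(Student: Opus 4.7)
The plan is to read off $J_\rho(s_k)$ from the identity $J_\rho(s)=\int_0^{+\infty} Q^n[s(\cdot)/\cdot](t)\,\psi_\rho(t)\,dt$ that was established in the proof of Proposition \ref{p:J}. Writing $f_k(t)=Q^n[s_k(x)/x](t)$ and splitting the integral across $D_+$ and $D_-$, the recursion \eqref{fk} gives $f_k\equiv f_0$ on $D_+$, so
\[
J_\rho(s_k)\;=\;\int_{D_+} f_0(t)\,\psi_\rho(t)\,dt\;+\;\int_{D_-} f_k(t)\,\psi_\rho(t)\,dt.
\]
The first integral is independent of $k$ and, by the very computation performed in the proof of Proposition \ref{p:J}, equals $P_n(\rho)/(2\rho)+\sum_{i\in I}\sum_{k=0}^n(\Phi_{\rho,k}(\tau_{i-1})-\Phi_{\rho,k}(\tau_i))$, which is precisely the right-hand side of \eqref{eq:max-seq}. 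The whole task therefore reduces to showing that the $D_-$-integral tends to zero as $k\to +\infty$.

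For this I would first establish pointwise convergence $f_k(t)\to 0$ on $D_-$. From \eqref{eq:fk} we have $f_k(t)=\prod_{i=1}^k(1-\varepsilon_i\eta(t))\,f_0(t)$, and by the construction of $\eta$ in \eqref{torus1}, $\eta(t)>0$ on each open interval $(\tau_{i-1},\tau_i)$, i.e.\ on all of $D_-$ except finitely many points. Combining this with the divergence $\sum_k\varepsilon_k=+\infty$ (which follows from the scaling $\varepsilon_k\sim\alpha/k$ obtained in the discussion preceding the statement), the elementary bound $\log(1-x)\leq -x$ forces $\sum_i\log(1-\varepsilon_i\eta(t))=-\infty$, hence $f_k(t)\to 0$ at every such $t$. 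The finitely many exceptional points are exactly the zeros of $\psi_\rho$, so they contribute nothing to the $D_-$-integral.

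Next I would invoke dominated convergence. By the remark following Proposition \ref{p:D}, $\overline{D_-}$ is compact, so $f_0$ and $\psi_\rho$ are bounded there and $f_0|\psi_\rho|$ is an integrable majorant for $|f_k\psi_\rho|$ on $D_-$ (since $0\leq\prod_{i=1}^k(1-\varepsilon_i\eta)\leq 1$). Therefore $\int_{D_-} f_k\psi_\rho\,dt\to 0$ and $\lim_{k\to+\infty}J_\rho(s_k)$ equals the right-hand side of \eqref{eq:max-seq}. Since Proposition \ref{p:J} asserts that this value is an upper bound for $J_\rho(s)$ for every $s\in\inc_\rho(\mathbb R_+)$, the supremum $J(\rho)$ is attained in the limit along $(s_k)$, proving both that $(s_k)$ is a maximizing sequence and that formula \eqref{eq:max-seq} holds. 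The principal obstacle I anticipate is not the limit itself but justifying, uniformly in $k$, that the scaling $\varepsilon_k\sim\alpha/k$ is compatible with admissibility $s_k\in\inc_\rho(\mathbb R_+)$; that is, reconciling the inequality \eqref{eq:sk2} with the slow decay needed for $\sum\varepsilon_k$ to diverge. Once this is in place, the remainder of the argument is essentially bookkeeping.
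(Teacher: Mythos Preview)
Your proposal is correct and follows essentially the same route as the paper: both express $J_\rho(s_k)=\int_{D_+}f_0\psi_\rho+\int_{D_-}f_k\psi_\rho$, identify the $D_+$-part with the bound computed in Proposition~\ref{p:J}, and pass to the limit in the $D_-$-part using $f_k\downarrow 0$ together with an integrable majorant (the paper phrases this as monotone convergence, you as dominated convergence, but the content is identical). Your use of the global identity $J_\rho(s)=\int_0^{+\infty}Q^n[s/\cdot]\,\psi_\rho$ before splitting is in fact slightly cleaner than the paper's split-then-integrate-by-parts presentation, and your concern about the admissibility of $\varepsilon_k\sim\alpha/k$ is legitimate but, as you note, is handled in the discussion preceding the proposition rather than in its proof.
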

\begin{proof}
For every $k\in\mathbb N$ we have 

\begin{align*}
J_{\rho}(s_k)=\int^{+\infty}_0\frac{s_k(t)}{t}\,\varphi_{\rho}(t)\,dt&=\int_{D_+}\frac{s_k(t)}{t}\,\varphi_{\rho}(t)\,dt+\int_{D_-}\frac{s_k(t)}{t}\,\varphi_{\rho}(t)\,dt\\&
=\int_{D_+}\frac{s_k(t)}{t}\,\varphi_{\rho}(t)\,dt+\int_{D_-}(Q^{-1})^n[f_k](t)\,\varphi_{\rho}(t)\,dt\\&
=\int_{D_+}\frac{s_k(t)}{t}\,\varphi_{\rho}(t)\,dt+\int_{D_-}f_k(t)\,\psi_{\rho}(t)\,dt.
\end{align*}
On the other hand by construction $f_0\geq f_1\geq f_2\geq...$, implying in particular that 
$$\int_{D_-}f_k(t)\,|\psi_{\rho}(t)|\,dt\leq \int_{D_-}f_0(t)\,|\psi_{\rho}(t)|\,dt<\int_{0}^{+\infty}f_0(t)\,|\psi_{\rho}(t)|\,dt<+\infty.$$
By Lebesgue's monotone convergence theorem \cite[Corollary 6.8.2]{Hasser} it follows that 
$$\lim_{k\to+\infty}\int_{D_-}f_k(t)\,\psi_{\rho}(t)\,dt=\int_{D_-}\lim_{k\to+\infty}f_k(t)\,\psi_{\rho}(t)\,dt=0.$$
Notice that by construction $s_k=s_0$ on $D_+$, therefore we get 
$$\lim_{k\to+\infty}J_{\rho}(s_k)=\lim_{k\to+\infty}\int_{D_+}\frac{s_k(t)}{t}\,\varphi_{\rho}(t)\,dt+\int_{D_-}f_k(t)\,\psi_{\rho}(t)\,dt=\int_{D_+}\frac{s_0(t)}{t}\,\varphi_{\rho}(t)\,dt.$$
The last integral was evaluated in Proposition \ref{p:J} and equals the estimate in \eqref{eq:J}. Because $\lim_{k\to+\infty}J_{\rho}(s_k)\leq J(\rho)$ then by Proposition \ref{p:J} the equality \eqref{eq:max-seq} follows. 

\end{proof}

\begin{remark}
	\label{r:3}
By \eqref{Mestimate} we have the estimate $M(1,v)\leq e^{n-1}P_n(\rho)$ for every $n\geq 2$ and $\rho>1$. On the other hand by Proposition \ref{p:max} we obtain $M(1,v)\leq 2\rho\,J(\rho)$ where $J(\rho)$ is given by the formula \eqref{eq:max-seq}. Since the later is achieved as the limit of a maximizing sequence then $2\rho \,J(\rho)\leq e^{n-1}P_n(\rho)$ must hold i.e. we have an improvement of the estimate when $\rho>1$. 
\end{remark}

\section{Main Results}

\subsection{P\'olya peaks} A main ingredient in the proof of the main result is the utilization of the so called P\'olya peaks.
Let $\rho>0$. A sequence $r_1,r_2,r_3,...$ is said to be a sequence of P\'olya peaks for a function $T(r)$ if there exist sequences of positive numbers $(\varepsilon_{k}), (\xi_k), (a_k), (A_k)$ satisfying
\begin{equation}
\label{eq:PP-ineq}
\lim_{k\to+\infty}\varepsilon_{k}=\lim_{k\to+\infty}\xi_k=\lim_{k\to+\infty}a_k=0\quad\text{and}\quad \lim_{k\to+\infty}a_kr_k=\lim_{k\to+\infty}A_k=+\infty
\end{equation}
such that the inequalities $r_ka_k\leq r\leq r_kA_k$ imply 
\begin{equation}
\label{eq:PP-ineq2}
T(r)\leq(1+\xi_k)\Big(\frac{r}{r_k}\Big)^{\rho+\varepsilon_{k}}T(r_k).
\end{equation}
Edrei \cite{Edrei1} proved that a sequence of P\'olya peaks for $T(r)$ always exists whenever $T(r), r\geq 1$ is unbounded, nondecreasing, nonnegative continuous function satisfying
\begin{equation}
\label{eq:Polya-peaks}
\liminf_{r\to+\infty}\frac{\log T(r)}{\log r}=\rho<+\infty.
\end{equation}

For a nonnegative psh function $u$ of lower order $\rho$ its Nevalinna characteristic $T(r,u)$ (possibly unbounded) is nondecreasing, nonnegative continuous function of $r\geq 0$. In view of definition \eqref{lowerorder} as an immediate corollary we have:

\begin{corollary}
	\label{c:Polya}
	Let $u$ be a nonnegative psh function in $\mathbb C^n$ of lower order $\rho$, then $T(r,u)$ has a sequence of P\'olya peaks of order $\rho$. 
\end{corollary}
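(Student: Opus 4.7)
The strategy is simply to verify directly that $T(r,u)$ satisfies all the hypotheses of the theorem of Edrei quoted above, whereupon the conclusion is immediate. Concretely, I will check that the map $r\mapsto T(r,u)$ on $[1,+\infty)$ is (i) nonnegative, (ii) nondecreasing, (iii) continuous, (iv) unbounded, and (v) satisfies \eqref{eq:Polya-peaks} with $\rho<+\infty$, and then invoke Edrei's theorem to produce the desired P\'olya peaks.

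Properties (i), (ii), and (v) are essentially definitional. Nonnegativity is clear since $u^+\geq 0$. For monotonicity, note that $u^+=\max\{u,0\}$ is psh whenever $u$ is psh (the pointwise maximum of two psh functions is psh and upper semicontinuity is preserved); hence the spherical mean $r\mapsto\int_{\mathbb S^n}u^+(r\zeta)\,ds_n(\zeta)$ is nondecreasing in $r$ by the submean value property applied on each complex line through the origin and integrated against the rotation-invariant probability measure $s_n$. Property (v) with $\rho<+\infty$ is the very definition \eqref{lowerorder} of lower order, which is finite by hypothesis.

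The remaining conditions are (iii) and (iv). Continuity I would establish by writing $T(r,u)$ as the spherical mean of the locally integrable function $u^+$ at radius $r$ and invoking dominated convergence: since $u^+$ is upper semicontinuous and locally integrable on $\mathbb C^n$, and the truncation map $x\mapsto x^+$ is $1$-Lipschitz, one can pass to the limit in $\int_{\mathbb S^n}u^+(r_k\zeta)\,ds_n(\zeta)$ as $r_k\to r$ and conclude continuity. For unboundedness, observe that if $T(r,u)$ remained bounded as $r\to+\infty$ then $\log T(r,u)/\log r\to 0$, forcing $\rho=0$; since the corollary is applied in the setting $\rho>1$ (and more generally is vacuous unless $\rho>0$), this is excluded.

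The only mildly delicate point is continuity, but it is entirely routine given that $u$ is psh and hence locally $L^1$; every other step is a one-line unpacking of definitions, which is precisely why the statement is recorded as an immediate corollary of Edrei's theorem.
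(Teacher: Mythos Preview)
Your proposal is correct and follows exactly the paper's approach: the corollary is derived immediately from Edrei's theorem after noting that $T(r,u)$ is nonnegative, nondecreasing, continuous, unbounded, and satisfies \eqref{eq:Polya-peaks} by the definition of lower order. The paper simply asserts these properties in one sentence, whereas you have spelled out brief justifications for each; the substance is identical.
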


\begin{lemma}
	\label{l:uniformly-bounded}
	Let $u$ be a nonnegative psh function in $\mathbb C^n$ of lower order $\rho$ and let $(r_k)$ be the sequence of P\'olya peaks for $T(r,u)$. Define 
	\begin{equation}
	\label{eq:uk}
	 u_k(z)\coloneqq\frac{u(r_kz)}{T(r_k,u)}.
	\end{equation}
	Then $(u_k)$ is a sequence of psh functions in $\mathbb C^n$ that is uniformly bounded on compact subsets of $\mathbb C^n$.
\end{lemma}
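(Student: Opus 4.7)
The plan is to proceed in three steps: first check that each $u_k$ is psh and nonnegative, then translate the P\'olya peak property into a uniform bound on the Nevanlinna characteristic $T(R,u_k)$, and finally convert this mean-value bound into a pointwise bound on compact sets via a Poisson-integral estimate for nonnegative subharmonic functions.

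Step one is routine: composition with the holomorphic dilation $z\mapsto r_k z$ preserves plurisubharmonicity, and division by the positive constant $T(r_k,u)$ preserves both plurisubharmonicity and nonnegativity. The positivity $T(r_k,u)>0$ holds for all sufficiently large $k$, since the lower-order hypothesis $\rho>0$ forces $T(r,u)$ to be unbounded and the P\'olya peak sequence satisfies $r_k\to+\infty$.

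Step two exploits the rotation-invariance of $s_n$ together with the definition of $u_k$ to compute
\[ T(R,u_k)=\int_{\mathbb S^n}u_k(R\zeta)\,ds_n(\zeta)=\frac{T(r_k R,u)}{T(r_k,u)}. \]
Applying the P\'olya peak inequality \eqref{eq:PP-ineq2} at $r=r_k R$ then gives $T(R,u_k)\leq (1+\xi_k)R^{\rho+\varepsilon_k}$ for every $R\in[a_k,A_k]$. Since $a_k\to 0$ and $A_k\to+\infty$, this holds eventually for any fixed $R>0$, and the right-hand side is uniformly bounded on every bounded interval of $R$.

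For step three, fix a compact set $K\subset\mathbb C^n$, choose $R_0$ with $K\subset\overline{B(0,R_0)}$, and set $R=2R_0$. Since each $u_k$ is nonnegative and psh, it is subharmonic on $\mathbb R^{2n}$ and admits the Poisson majorization
\[ u_k(z)\leq\int_{\mathbb S^n}P(z,R\zeta)\,u_k(R\zeta)\,ds_n(\zeta)\leq C(n,R_0)\,T(R,u_k) \]
for all $z\in K$, where $P(z,\cdot)$ is the Poisson kernel of the ball of radius $R$ in $\mathbb R^{2n}$, uniformly bounded on the compact region $\{|z|\leq R_0\}\times\{|R\zeta|=R\}$. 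Combining with step two yields the $k$-independent bound $\sup_{z\in K}u_k(z)\leq C(n,R_0)(1+\xi_k)(2R_0)^{\rho+\varepsilon_k}$. The main point requiring care is the Poisson majorization in step three: one must identify the normalized rotation-invariant measure $s_n$ on the complex sphere $\mathbb S^n\subset\mathbb C^n$ with the normalized $(2n-1)$-dimensional surface measure on the unit sphere of $\mathbb R^{2n}$, so that the classical Poisson formula for real subharmonic functions produces precisely the Nevanlinna characteristic \eqref{Nev-main} on the right-hand side. This is a purely formal matter of measure-theoretic bookkeeping, but it is the one place where the two viewpoints on $u_k$ (as a psh function on $\mathbb C^n$ versus as a subharmonic function on $\mathbb R^{2n}$) must be reconciled.
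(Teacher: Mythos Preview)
Your proof is correct and follows essentially the same strategy as the paper. The only cosmetic difference is that the paper packages the Poisson majorization of your Step~3 as the ready-made inequality $M(r,v)\leq 3\cdot 2^{2n-2}\,T(2r,v)$ for nonnegative subharmonic $v$, rather than spelling out the Poisson-kernel bound explicitly.
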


\begin{proof}
	First note that for every $k\in\mathbb N$ the function $u_k$ is psh in $\mathbb C^n$ since $T(r_k,u)>0$ and $u(r_kz)$ is psh in $\mathbb C^n$. For a given nonnegative function $v$ subharmonic in $\mathbb C^n$ the following inequality holds true $M(r,v)\leq 3\cdot 2^{2n-2}\,T(2r,v)$ (see \cite[(2.2)]{Kh99}). Because a psh function is subharmonic then applying this to each $u_k$ yields 
	$$0\leq u_k(z)=\frac{u(r_kz)}{T(r_k,u)}\leq \frac{M(r_k\,r,u)}{T(r_k,u)}\leq 3\cdot 2^{2n-2}\frac{T(2r_k\,r,u)}{T(r_k,u)} ,\quad |z|\leq r.$$
	In view of inequality \eqref{eq:PP-ineq2} for $a_k\leq 2r\leq A_k$ we obtain 
	$$\frac{T(2r_k\,r,u)}{T(r_k,u)}\leq (1+\xi_k)r^{\rho+\varepsilon_{k}}\leq (1+\xi)\,r^{\rho+\varepsilon}$$
	for some $\xi, \varepsilon>0$. Consequently for $a_k\leq 2|z|\leq A_k$ we get 
	$$0\leq u_k(z)\leq 3\cdot 2^{2n-2}(1+\xi)\,r^{\rho+\varepsilon}.$$ 
	When $2|z|\leq a_k$ then again from definition of $u_k$ it follows that 
	$$0\leq u_k(z)\leq  \frac{M(a_k\,r_k/2,u)}{T(r_k,u)}\leq \frac{M(\,r_k/2,u)}{T(r_k,u)}\leq 3\cdot 2^{2n-2}\quad \text{for sufficiently large}\,k.$$
	In both situations $u_k$ is uniformly bounded on the ball $\mathbb B_k:=\{z\in\mathbb C^n\;:\;|z|\leq A_k/2\}$  for sufficiently large $k$. Now note that for any compact set $K\subset \mathbb C^n$ we have $K\subseteq\mathbb B_k$ for sufficiently large $k$. This completes the proof.
\end{proof}

An important part in the proof of the theorem plays the following result about sequences of subharmonic functions that are uniformly bounded on compact sets. While its original statement \cite[Theorem 4.1.9]{Hormander} is for subharmonic function in $\mathbb R^m$, the arguments extend equally also for subharmonic functions in $\mathbb C^n$, by identifying $\mathbb C^n\cong\mathbb R^{2n}$. 

\begin{lemma}\cite[Theorem 4.1.9]{Hormander}
	\label{l:Hormander}
Let $L^1_{\loc}(\mathbb C^n)$ denote the space of locally integrable functions in $\mathbb C^n$. and let $(v_k)$ be a sequence of subharmonic functions in $\mathbb C^n$ that is uniformly bounded on compact subsets of $\mathbb C^n$. If $v_k$ does not converge to $-\infty$ uniformly  on every compact subset of $\mathbb C^n$ then there exists a subsequence $(v_{n_k})$ convergent to a subharmonic function $v$ in $L^1_{\loc}(\mathbb C^n)$. Moreover the following inequality holds
\begin{equation}
\label{eq:Hormander}
\limsup_{k\to+\infty}v_{n_k}(z)\leq v(z)\quad z\in\mathbb C^n
\end{equation} 
and the two sides equal and finite almost everywhere.
\end{lemma}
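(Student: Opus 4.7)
The plan is to follow the classical approach for compactness of sequences of subharmonic functions in four stages. First I would upgrade the uniform upper boundedness on compact sets into an $L^1_{\loc}$ bound along some subsequence. The upper bound $v_k\leq M$ on a compact set $K$ immediately yields $\int_K v_k^+\leq M|K|$. To control the negative part, I invoke the hypothesis that $v_k\not\to-\infty$ uniformly on some compact set: along a subsequence (still denoted $(v_k)$) there exist points $z_k$ in that set at which $v_k(z_k)\geq -C$. Applying the submean value inequality on a ball around $z_k$ contained in a slight enlargement of $K$ gives $\int v_k\geq -C'$, whence $\|v_k\|_{L^1(K)}$ is bounded. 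A standard covering argument then propagates this bound to every compact subset of $\mathbb C^n$.

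Second, by weak compactness of bounded sets in $L^1_{\loc}$ (or equivalently by compactness of locally bounded families of positive Radon measures applied to the distributional Laplacians $\Delta v_k\geq 0$) I extract a subsubsequence $(v_{n_k})$ converging in $L^1_{\loc}$ to a function $v$. The subharmonicity of $v$ follows because the relation $\Delta v_{n_k}\geq 0$ in the distribution sense is preserved under $L^1_{\loc}$ convergence; passing to the upper semicontinuous regularization yields a genuine subharmonic function in $\mathbb C^n$, and this is the representative I take for $v$.

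Third, for the pointwise inequality $\limsup v_{n_k}(z)\leq v(z)$ I would write the submean value inequality
\[
v_{n_k}(z)\leq\frac{1}{|B(z,r)|}\int_{B(z,r)}v_{n_k}(w)\,dw,
\]
take $\limsup$ on both sides using $L^1_{\loc}$ convergence, and then send $r\to 0$. Since $v$ is subharmonic, its ball-averages decrease to $v(z)$ at every point, so the inequality holds pointwise on $\mathbb C^n$. For the almost-everywhere equality, the reverse Fatou lemma applied to the uniformly upper-bounded sequence $v_{n_k}$ gives $\int_K\limsup v_{n_k}\geq\limsup\int_K v_{n_k}=\int_K v$ on each compact $K$, and combined with the pointwise inequality this forces $\limsup v_{n_k}=v$ almost everywhere.

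The main obstacle I anticipate is the first step, namely bootstrapping a one-sided upper bound together with the mere non-divergence-to-$-\infty$ hypothesis into uniform $L^1_{\loc}$ bounds on \emph{every} compact set; the points $z_k$ where $v_k(z_k)\geq -C$ can drift within the given compact set, so chaining the local bound through overlapping balls must be done carefully. A secondary difficulty is maintaining a clean distinction between the $L^1_{\loc}$-limit $v$ and its upper semicontinuous regularization, so that the pointwise $\limsup$ inequality and the a.e.\ equality refer consistently to the correct subharmonic representative.
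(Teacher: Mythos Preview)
The paper does not prove this lemma at all: it is stated with the citation \cite[Theorem 4.1.9]{Hormander} and used as a black box, with only the remark that the original statement in $\mathbb R^m$ carries over to $\mathbb C^n\cong\mathbb R^{2n}$. So there is no ``paper's own proof'' to compare against.

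Your outline is the standard argument (and is essentially how H\"ormander proves it). The steps are sound: the uniform upper bound plus a single lower bound point gives an $L^1$ bound on one ball via the submean inequality, a connectedness/chaining argument propagates it, weak compactness yields an $L^1_{\loc}$ limit whose distributional Laplacian is nonnegative, and the submean inequality combined with monotonicity of ball averages gives the pointwise $\limsup$ bound and the a.e.\ equality. The two difficulties you flag are exactly the places where care is needed, but neither is a genuine obstruction. In short, your proposal is correct and supplies a proof where the paper simply invokes the literature.
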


\subsection{Main Theorems}
\begin{thm}
\label{Maintheorem}
 Let $u:\mathbb{C}^n\to[-\infty,+\infty)$ be a psh function of finite lower order $\rho>1$ then $\vartheta(u)\leq 2\rho\,J(\rho)$ where $J(\rho)$ is given by \eqref{eq:max-seq}. In particular $K_n(\rho)=2\rho\,J(\rho)$.
\end{thm}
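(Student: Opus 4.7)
The plan is a blow-up argument based on P\'olya peaks. After passing to $u^+$ I may assume $u\ge 0$. Corollary \ref{c:Polya} furnishes a sequence $(r_k)$ of P\'olya peaks of order $\rho$ for $T(\cdot,u)$, with parameters $(\varepsilon_k,\xi_k,a_k,A_k)$ as in \eqref{eq:PP-ineq}. The rescalings
$$u_k(z)\coloneqq \frac{u(r_k z)}{T(r_k,u)}$$
are nonnegative psh functions and, by Lemma \ref{l:uniformly-bounded}, uniformly bounded on every compact subset of $\mathbb C^n$. Since $T(1,u_k)=1$, the sequence cannot diverge to $-\infty$ uniformly on compacts, so Lemma \ref{l:Hormander} delivers a subsequence $u_{n_k}\to u^\ast$ in $L^1_{\loc}(\mathbb C^n)$ with $u^\ast$ psh and $\limsup_{k} u_{n_k}(z)\le u^\ast(z)$ pointwise.

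Next I would check that $u^\ast\in\psh_\rho(\mathbb C^n)$ with $u^\ast(0)=0$. Nonnegativity is preserved under $L^1_{\loc}$ limits. The P\'olya peak inequality \eqref{eq:PP-ineq2} gives
$$T(r,u_{n_k})=\frac{T(r\,r_{n_k},u)}{T(r_{n_k},u)}\le (1+\xi_{n_k})\,r^{\rho+\varepsilon_{n_k}}\qquad\text{for}\qquad r\in[a_{n_k},A_{n_k}],$$
and as $a_{n_k}\to 0,\,A_{n_k}\to+\infty$ this range eventually contains any fixed $r>0$. Since $u_{n_k}\to u^\ast$ in $L^1_{\loc}$ implies $u_{n_k}^+\to (u^\ast)^+$ in $L^1_{\loc}$, Fubini (passing to a further subsequence if needed) yields $T(r,u_{n_k})\to T(r,u^\ast)$ for a.e. $r>0$, so $T(r,u^\ast)\le r^\rho$ almost everywhere. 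Continuity of $r\mapsto T(r,u^\ast)$ extends the bound to every $r>0$. The sub-mean-value inequality then forces $u^\ast(0)\le T(r,u^\ast)\le r^\rho\to 0$ as $r\downarrow 0$, so $u^\ast(0)=0$. Consequently Propositions \ref{p:s}, \ref{p:J} and \ref{p:max} apply to $u^\ast$ and give $M(1,u^\ast)\le 2\rho J(\rho)$.

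To close the loop, note that $M(1,u_k)=M(r_k,u)/T(r_k,u)$, so
$$\vartheta(u)\le \liminf_{k\to+\infty} M(1,u_k)\le \liminf_{k\to+\infty} M(1,u_{n_k}),$$
and it remains to show $\limsup_{k} M(1,u_{n_k})\le M(1,u^\ast)$. For $\varepsilon>0$, upper semicontinuity of $u^\ast$ makes $\{u^\ast<M(1,u^\ast)+\varepsilon\}$ an open neighbourhood of the compact unit sphere, hence it contains a shell $\{|\,|z|-1\,|<\delta\}$; Hartogs' lemma (available from the pointwise bound $\limsup u_{n_k}\le u^\ast$ in Lemma \ref{l:Hormander} together with the uniform local bound) then gives $u_{n_k}(z)<M(1,u^\ast)+2\varepsilon$ on this shell for all sufficiently large $k$, whence $M(1,u_{n_k})\le M(1,u^\ast)+2\varepsilon$. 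Letting $\varepsilon\downarrow 0$ and concatenating all inequalities yields $\vartheta(u)\le 2\rho J(\rho)$, hence $K_n(\rho)\le 2\rho J(\rho)$. The matching lower bound $K_n(\rho)\ge 2\rho J(\rho)$ would follow from the maximizing sequence $(s_k)$ of Proposition \ref{p:max}, converted via Lemma \ref{inc} into concrete psh test functions whose Paley ratios approach $2\rho J(\rho)$.

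The main obstacle is the middle paragraph: transferring the quantitative data $T(r,u_{n_k})\le (1+\xi_{n_k})r^{\rho+\varepsilon_{n_k}}$ and the pointwise supremum $M(1,u_{n_k})$ through $L^1_{\loc}$ convergence, which only controls the functions up to sets of measure zero. Both steps rely on Fubini arguments for spherical integrals and on Hartogs-type uniform convergence from $\limsup$-bounds; the delicacy is that maximizers of $u_{n_k}$ on the unit sphere could in principle concentrate on a null set that is invisible to the $L^1_{\loc}$ limit.
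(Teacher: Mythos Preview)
Your blow-up/P\'olya-peak argument for the upper bound $\vartheta(u)\le 2\rho J(\rho)$ is essentially the paper's approach. The paper handles your two ``obstacles'' by citing external results rather than arguing directly: for $T(r,v)\le r^\rho$ it invokes \cite[Theorem~4]{Kondratyuk} to pass to a further subsequence along which the spherical $L^1$ means converge, and for $\limsup_k M(1,u_{n_k})\le M(1,v)$ it cites a Poisson-kernel estimate from \cite[Theorem~1]{Kh99} rather than Hartogs' lemma. Your Hartogs route is valid and arguably more self-contained. One point you glossed over: Lemma~\ref{l:Hormander} as stated only yields a \emph{subharmonic} limit $v$, not a psh one; the paper spends a paragraph showing that the upper-semicontinuous regularization $w^*$ of $\limsup_k u_{n_k}$ is psh (by restricting to complex lines and using that decreasing limits of subharmonic functions are subharmonic) and that $w^*=v$ almost everywhere, whence $v$ is psh.

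For sharpness the paper takes a different route from yours: it sets $u_0(z)=\log|E_\rho(z_1)|$ with $E_\rho$ the Mittag--Leffler function and argues that along this example the chain of inequalities \eqref{limitk} becomes a chain of equalities. Your proposal---feed the maximizing sequence $(s_k)$ of Proposition~\ref{p:max} through Lemma~\ref{inc} to manufacture psh test functions---is natural but incomplete as written: Lemma~\ref{inc} produces a log-convex $T$ from $s$, but you still need a psh $u$ with $\vartheta(u)$ close to $2\rho J_\rho(s_k)$, and that requires the Essen-type inequality \eqref{vestimate} underlying Proposition~\ref{p:s} to hold with (asymptotic) equality, which you have not addressed.
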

\begin{proof}
 We follow the lines in the proof of \cite[Theorem 1]{Kh99}. Let $u:\mathbb{C}^n\to[-\infty,+\infty)$ be a psh function of finite lower order $\rho>1$. Assume without loss of generality that $u\geq0$. If $T(r,u)$ is the Nevanlinna characteristic of $u$ then by Corollary \ref{c:Polya} it follows that $T(r,u)$ has a sequence of P\'olya peaks $(r_k)$ of order $\rho$. Let $(u_k)$ be defined as in \eqref{eq:uk}. By Lemma \ref{l:uniformly-bounded} the sequence $(u_k)$ is uniformly bounded on compact subsets of $\mathbb C^n$. Then by Lemma \ref{l:Hormander} there is a subsequence $(u_{n_k})$ which converges to a nonnegative subharmonic function $v$ in $L^1_{\loc}(\mathbb{C}^n)$. Moreover $\limsup_{k\to+\infty}u_{n_k}(z)\leq v(z)$ for all $z\in\mathbb{C}^n$. If $\limsup_{k\to+\infty}u_{n_k}(z)\coloneqq w(z)$ we let $w^*(z)\coloneqq\limsup_{y\to z}w(y)$ denote the upper semicontinuous regularization of $w(z)$. It can easily be checked that $w^*(z)$ is upper semicontinuous and $w^*$ is the least function with the property $w^*\geq w$. On the other hand $v$ is subharmonic, hence upper semicontinuous. We then have the inequalities
\begin{equation}
 \label{regularization}
 \limsup_{k\to+\infty}u_{n_k}(z)=w(z)\leq w^*(z)\leq v(z)=v^*(z).
\end{equation}
Convergence of $(u_{n_k})$ to $v$ in $L^1$ norm implies that $w(z)$ and $v(z)$ coincide almost everywhere in $\mathbb{C}^n$.  Indeed let $K\subset\mathbb C^n$ be a compact set. From the triangle inequality we have 
$$0\leq\int_K|w(z)-v(z)|\,d\lambda(z)\leq \int_K|w(z)-u_{n_k}(z)|\,d\lambda(z)+\int_K|u_{n_k}(z)-v(z)|\,d\lambda(z)$$
where $\,d\lambda$ is the usual (real) Lebesgue measure in $\mathbb C^n$.
Passing in the limit we obtain
\begin{align*}
\int_K|w(z)-v(z)|\,d\lambda&\leq\limsup_{k\to+\infty}\int_K|w(z)-u_{n_k}(z)|\,d\lambda+\lim_{k\to+\infty}\int_K|u_{n_k}(z)-v(z)|\,d\lambda\\&
=\limsup_{k\to+\infty}\int_K|w(z)-u_{n_k}(z)|\,d\lambda
\leq \int_K\limsup_{k\to+\infty}|w(z)-u_{n_k}(z)|\,d\lambda=0
\end{align*}
where the passage of the limit inside the integral is justified by the (reversed) Fatou's Lemma \cite[Lemma 6.8.5]{Hasser} since $|w(z)-u_{n_k}(z)|\leq |w(z)|+C_K\leq 2C_K$ for every $k\in\mathbb  N$ and a certain constant $C_K$ possibly dependent on $K$. Therefore $w(z)=v(z)$ almost everywhere on $K$. Since $K\subset\mathbb C^n$ is arbitrary then $w(z)=v(z)$ almost everywhere on $\mathbb C^n$. In particular inequality \eqref{regularization} yields $w^*(z)=v(z)$ almost everywhere.

Now consider any complex line $a+zb\subset\mathbb{C}^n$ where $a,b\in\mathbb{C}^n$ are fixed and $z\in\mathbb{C}$. By definition of plurisubharmonicity it follows $u_{n_k}(a+zb)$ is subharmonic in $z\in\mathbb{C}$. Denote  $w_m(z)\coloneqq\sup_{k\geq m}u_{n_k}(a+zb)$ for each $z\in \mathbb{C}$. Then $(w_m)$ is a nonincreasing sequence of functions and $\lim_mw_m(z)=w(a+zb)$ for each $z\in\mathbb{C}$. Let $w^*_m$ denote the upper semicontinuous regularization of $w_m$. By \cite[Theorem 3.2.2]{Hormander1} $w^*_m$ is subharmonic. On the other hand $w^*_m$ is the least function for which $w^*_m\geq w_m$ implying $\lim_mw^*_m(z)=w^*(a+zb)$. Because $(w^*_m)$ is a decreasing sequence of subharmonic functions then the limit $w^*(a+zb)$ is subharmonic in $z\in\mathbb{C}$. Since the complex line $a+zb\subset\mathbb{C}^n$ is arbitrary then by definition $w^*$ is psh in $\mathbb{C}^n$.  On the other hand two subharmonic functions that coincide almost everywhere are the same. 
Therefore $v$ is psh function in $\mathbb{C}^n$. By \cite[Theorem 4]{Kondratyuk} the sequence $(u_{n_k})$ can be chosen so that 
$$\lim_{k\to+\infty}\int_{\mathbb S_n}|u_{n_k}(r\,\zeta)-v(r\,\zeta)|\,ds_n(\zeta)=0,\quad r>0.$$
This in turn implies 
\begin{align*}
T(r,v)=\int_{\mathbb S_n}v(r\,\zeta)\,ds_n(\zeta)&\leq \int_{\mathbb S_n}u_{n_k}(r\,\zeta)\,ds_n(\zeta)+o(1)\\&
=\int_{\mathbb S_n}\frac{u(r_{n_k}r\,\zeta)}{T(r_{n_k},u)}\,ds_n(\zeta)+o(1)=\frac{T(r_{n_k}r,u)}{T(r_{n_k},u)}+o(1),\quad\text{as}\;k\uparrow+\infty.
\end{align*}
By \eqref{eq:PP-ineq2} it follows $T(r,v)\leq r^{\rho}$ and thus $v\in\psh_{\rho}(\mathbb C^n)$.
Moreover by means of the Poisson kernel (see \cite[Theorem 1]{Kh99}) it is shown that 
\begin{equation}
 \label{Max}
 \limsup_{k\to+\infty}M(1,u_k)\leq M(1,v).
\end{equation}
Definition of $(u_k)$ implies
\begin{align*}
 M(1,u_k)=M\Big(1,\frac{u(r_kz)}{T(r_k,u)}\Big)&=\max\Big\{\frac{u(r_kz)}{T(r_k,u)}\;:\;|z|=1,z\in\mathbb{C}^n\Big\}\\&=\frac{1}{T(r_k,u)}\max\{u(z)\;:\;|z|=r_k,z\in\mathbb{C}^n\}=\frac{M(r_k,u)}{T(r_k,u)}.
\end{align*}
Hence by \eqref{Max}
\begin{equation}
 \label{limitk}
 \vartheta(u)=\liminf_{r\to+\infty}\frac{M(r,u)}{T(r,u)}\leq \limsup_{k\to+\infty}\frac{M(r_k,u)}{T(r_k,u)}\leq M(1,v).
\end{equation}
On the other hand  $M(1,v)\leq 2\rho J(\rho)$. By Proposition \ref{p:J} it follows that 
$$\vartheta(u)\leq P_n(\rho) + 2\rho\sum_{i\in I}\sum_{k=0}^n\Big(\Phi_{\rho, k}(\tau_{i-1})-\Phi_{\rho, k}(\tau_{i})\Big).$$
Now we demonstrate that this estimate for $\vartheta(u)$ is nonimprovable i.e. $K_n(\rho)=2\rho\,J(\rho)$. It suffices to show that there is some psh function $u_0$ in $\mathbb C^n$ of finite lower order $\rho>1$ such that all inequalities in \eqref{limitk} hold with equality. To this end for $\rho>1$ let 
\begin{equation}
\label{eq:Mittag-Leffler}
E_{\rho}(z):=\sum_{j=0}^{+\infty}\frac{z^j}{\Gamma(1+j/\rho)},\quad z\in\mathbb C.
\end{equation}
  Define $u_0(z):=\log|E_{\rho}(z_1)|$ where $z:=(z_1,...,z_n)\in\mathbb C^n$. Since $E_{\rho}$ is an entire function of order $\rho$ it can be shown by direct calculations that $u_0$ is a psh function in $\mathbb C^n$ of finite lower order $\rho$. The following asymptotic for the characteristic functions of $u_0$ hold
 \begin{align}
 \label{eq:asymptotics}
  M(r,u_0)=r^{\rho}+O(1)\quad \text{and}\quad T(r,u_0)=P^{-1}_n(\rho)\,r^{\rho}+O(1).
 \end{align}
 This implies that  $$\vartheta(u_0)=\liminf_{r\to+\infty}\frac{M(r,u_0)}{T(r,u_0)}=\lim_{r\to+\infty}\frac{M(r,u_0)}{T(r,u_0)}=P_n(\rho)$$ and in particular for any P\'olya peaks $(r_k)$ of $T(r,u_0)$ it holds that 
 $$\limsup_{k\to+\infty}\frac{M(r_k,u_0)}{T(r_k,u_0)}=\vartheta(u_0).$$ 
 By construction of the sequence $(u_{0,k})$ from a given P\'olya peaks $(r_k)$ we have 
 $$u_{0,k}(z)=\frac{u_0(r_kz)}{T(r_k,u_0)}\leq\frac{M(r_kr,u_0)}{T(r_k,u_0)},\quad z\in\mathbb C^n\;\text{with}\;|z|=r.$$
 By the asymptotic formulae \eqref{eq:asymptotics} we obtain that 
 $$u_{0,k}(z)\leq \frac{r_k^{\rho}\,r^{\rho}+O(1)}{P^{-1}_n(\rho)\,r_k^{\rho}+O(1)}$$
 and consequently $\limsup_{k\to+\infty}u_{0,k}(z)\leq P_n(\rho)\,r^{\rho}$ for every $z\in\mathbb C^n$ with $|z|=r$.
 From the earlier arguments we have shown that $\limsup_{y\to z}\limsup_{k\to+\infty}u_{0,k}(y)= v_0(z)$ for some psh function $v_0\in\psh_{\rho}(\mathbb C^n)$. In particular we obtain that $v_0(z)\leq P_n(\rho)\,|z|^{\rho}$ and consequently $M(1,v_0)\leq P_n(\rho)\, M(1,|z|^{\rho})=P_n(\rho)$. Therefore $\vartheta(u_0)=M(1,v_0)$. 
\end{proof}

For a plurisubharmonic function $u$ of finite lower order $\rho$ let $\rho(r)$ denote its refined order i.e. $\lim_{r\to+\infty}\rho(r)=\rho$ and $\lim_{r\to+\infty}r\rho'(r)\ln r=0$ (see \cite[B. Ja. Levin, p.32]{Levin}). Let $\sigma_T$ and $\sigma_M$ be types of the characteristic functions $T(r,u)$ and $M(r,u)$ respectively with respect to the refined order $\rho(r)$. 
Then we obtain the following comparison result.

\begin{thm}
 The following estimates (nonimprovable) hold true
 \begin{align}
 \label{type} 
 \sigma_M\leq \left\{
      \begin{array}{lr}
       \displaystyle P_n(\rho)\,\sigma_T & : 0\leq\rho\leq\displaystyle 1\\
     K_n(\rho)\,\sigma_T & : \rho>\displaystyle 1.
\end{array}
    \right.
 \end{align}
\end{thm}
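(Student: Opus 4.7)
The plan is to reduce the type inequality to the already-established normalized bound $M(1,v)\leq K_n(\rho)$ (resp.\ $\leq P_n(\rho)$ for $0\leq \rho\leq 1$) for $v\in \psh_\rho(\mathbb C^n)$, via a blow-up at a sequence of radii that realizes the type $\sigma_M$. Assume without loss of generality that $u\geq 0$ and $0<\sigma_T<+\infty$, the cases $\sigma_T=0$ and $\sigma_T=+\infty$ being either trivial or degenerate. Fix $\varepsilon>0$; by definition of the type, $T(r,u)\leq (\sigma_T+\varepsilon)\,r^{\rho(r)}$ for all sufficiently large $r$. Choose a sequence $t_k\to+\infty$ with $M(t_k,u)/t_k^{\rho(t_k)}\to \sigma_M$, and define the rescaled psh functions
\begin{equation*}
u_k(z)\coloneqq \frac{u(t_k z)}{t_k^{\rho(t_k)}},\qquad z\in\mathbb C^n.
\end{equation*}

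The decisive point is the scaling behavior of the refined order: the condition $\lim_{r\to+\infty}r\rho'(r)\log r=0$ implies $(t_k r)^{\rho(t_k r)}/t_k^{\rho(t_k)}\to r^{\rho}$ locally uniformly in $r\in(0,+\infty)$ (see \cite[Levin]{Levin}). Consequently
\begin{equation*}
T(r,u_k)=\frac{T(t_k r,u)}{t_k^{\rho(t_k)}}\leq (\sigma_T+\varepsilon)\,\frac{(t_k r)^{\rho(t_k r)}}{t_k^{\rho(t_k)}}\leq (\sigma_T+\varepsilon+o(1))\,r^{\rho}
\end{equation*}
uniformly on compact $r$-intervals. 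The $M\le 3\cdot 2^{2n-2}T(2\cdot,\cdot)$ bound used in Lemma \ref{l:uniformly-bounded} then shows that $(u_k)$ is uniformly bounded on compact subsets of $\mathbb C^n$. By Lemma \ref{l:Hormander} a subsequence $(u_{n_k})$ converges in $L^1_{\loc}(\mathbb C^n)$ to a nonnegative function whose upper semicontinuous regularization $v^*$ is psh on $\mathbb C^n$, and the Poisson-kernel argument from the proof of Theorem \ref{Maintheorem} gives $\limsup_{k\to+\infty}M(1,u_{n_k})\leq M(1,v^*)$. Passing to the limit in the integral inequality for $T(r,u_k)$ (using the $L^1_{\loc}$ convergence and the rotational integral defining $T$, exactly as in the proof of Theorem \ref{Maintheorem}) yields $T(r,v^*)\leq (\sigma_T+\varepsilon)\,r^{\rho}$, i.e.\ $v^*/(\sigma_T+\varepsilon)\in \psh_\rho(\mathbb C^n)$. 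Applying Theorem \ref{Maintheorem} for $\rho>1$ (respectively Theorem \ref{Khthm} for $0\leq\rho\leq1$) to $v^*/(\sigma_T+\varepsilon)$ gives
\begin{equation*}
M(1,v^*)\leq K_n(\rho)\,(\sigma_T+\varepsilon)\quad\text{(resp. }M(1,v^*)\leq P_n(\rho)\,(\sigma_T+\varepsilon)\text{)}.
\end{equation*}
Since $M(t_k,u)/t_k^{\rho(t_k)}=M(1,u_k)$, we obtain $\sigma_M\leq \limsup_k M(1,u_{n_k})\leq M(1,v^*)$, and letting $\varepsilon\downarrow 0$ finishes the upper bound.

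For nonimprovability, the Mittag-Leffler example $u_0(z)=\log|E_\rho(z_1)|$ from the proof of Theorem \ref{Maintheorem} suffices when $0\leq \rho\leq 1$: its asymptotics $M(r,u_0)=r^{\rho}+O(1)$ and $T(r,u_0)=P_n^{-1}(\rho)\,r^{\rho}+O(1)$, evaluated against the constant refined order $\rho(r)\equiv \rho$, yield $\sigma_{M}(u_0)=1$ and $\sigma_T(u_0)=P_n^{-1}(\rho)$, i.e.\ $\sigma_M=P_n(\rho)\,\sigma_T$. For $\rho>1$ the same scheme works with $u_0$ replaced by the limit (in the sense of \eqref{regularization}) of the rescaled psh representatives associated to the maximizing sequence $(s_k)$ produced in Proposition \ref{p:max}; by construction $J_\rho(s_k)\to J(\rho)$, so along the corresponding radii one recovers $M(r,u_0)/T(r,u_0)\to 2\rho J(\rho)=K_n(\rho)$ and equality in the type estimate. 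The main difficulty is the interaction between the refined-order scaling and the $L^1_{\loc}$-limit extraction: one must verify that the normalized limit $v^*$ still lies in $\psh_\rho(\mathbb C^n)$ with the correct constant $(\sigma_T+\varepsilon)$, and that $\limsup M(1,u_{n_k})$ is controlled by $M(1,v^*)$ despite the merely $L^1_{\loc}$ mode of convergence; both points are handled by the tools already forged in the proof of Theorem \ref{Maintheorem}.
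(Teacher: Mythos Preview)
Your upper-bound argument is valid but takes a different and heavier route than the paper's. The paper does not perform a fresh blow-up: it simply notes that the proof of Theorem~\ref{Maintheorem} already yields $\limsup_k M(r_k,u)/T(r_k,u)\leq K_n(\rho)$ along the P\'olya peaks $(r_k)$ of $T(r,u)$ (this is the content of \eqref{Max}--\eqref{limitk}), then writes
\[
\frac{M(r_k,u)}{T(r_k,u)}=\frac{M(r_k,u)}{r_k^{\rho(r_k)}}\cdot\frac{r_k^{\rho(r_k)}}{T(r_k,u)}
\]
and passes to the $\limsup$, using that along the P\'olya peaks the $\limsup$'s of $M(r,u)/r^{\rho(r)}$ and $T(r,u)/r^{\rho(r)}$ coincide with $\sigma_M$ and $\sigma_T$. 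By contrast you re-run the whole compactness/$L^1_{\loc}$-extraction machinery at a sequence realizing $\sigma_M$, and in addition invoke the refined-order scaling property $(t_kr)^{\rho(t_kr)}/t_k^{\rho(t_k)}\to r^\rho$ from \cite{Levin}. This works, but it repeats effort already encapsulated in Theorem~\ref{Maintheorem}; the paper's argument is shorter precisely because it localizes to P\'olya peaks, where the relevant $\limsup$'s are simultaneously realized.

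One genuine soft spot: your nonimprovability sketch for $\rho>1$ does not hold up as stated. You propose to replace the Mittag--Leffler example by ``the limit of the rescaled psh representatives associated to the maximizing sequence $(s_k)$'' from Proposition~\ref{p:max}, but Proposition~\ref{p:notatt} says precisely that no single $s\in\inc_\rho(\mathbb R_+)$ attains $J(\rho)$; there is therefore no limiting psh function $u_0$ with $\vartheta(u_0)=K_n(\rho)$ on which to read off $\sigma_M=K_n(\rho)\,\sigma_T$. At best the maximizing sequence furnishes a \emph{sequence} of psh functions whose ratios $\sigma_M/\sigma_T$ approach $K_n(\rho)$, which still establishes sharpness of the constant but is not what your sentence asserts. (The paper's own proof, it should be said, does not spell out the nonimprovability claim either.)
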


\begin{proof}
 By definition of the type $\sigma$ of a given function $f$ with respect $\rho(r)$ we have 
 $$\sigma_M\coloneqq\limsup_{r\to+\infty}\frac{M(r,u)}{r^{\rho(r)}}\quad\text{and}\quad \sigma_T\coloneqq\limsup_{r\to+\infty}\frac{T(r,u)}{r^{\rho(r)}}.$$
 By Theorem \ref{Maintheorem} for sufficiently small $\varepsilon>0$ and sufficiently large $k$ the following inequality 
 \begin{align}
 \label{approx}
 \frac{M(r_k,u)}{T(r_k,u)}\leq \left\{
      \begin{array}{lr}
       \displaystyle P_n(\rho)+\varepsilon & : 0\leq\rho\leq\displaystyle 1\\
        K_n(\rho)+\varepsilon & : \rho>\displaystyle 1
\end{array}
    \right.
 \end{align}
 holds true where $(r_k)$ is the P\'olya peaks of $T(r,u)$. Writing
 $$\frac{M(r_k,u)}{T(r_k,u)}=\frac{M(r_k,u)}{r_k^{\rho(r_k)}}\cdot\frac{r_k^{\rho(r_k)}}{T(r_k,u)}$$
 and passing in the limit superior in \eqref{approx} yield
 \begin{align*}
 \limsup_{k\to+\infty}\frac{M(r_k,u)}{r_k^{\rho(r_k)}}\leq \left\{
      \begin{array}{lr}
       \displaystyle P_n(\rho)\limsup_{k\to+\infty}\frac{T(r_k,u)}{r_k^{\rho(r_k)}} & : 0\leq\rho\leq\displaystyle 1\\
       K_n(\rho)\displaystyle\limsup_{k\to+\infty}\frac{T(r_k,u)}{r_k^{\rho(r_k)}} & : \rho>\displaystyle 1.
\end{array}
    \right.
 \end{align*}
 On the other hand 
 $$\limsup_{k\to+\infty}\frac{T(r_k,u)}{r_k^{\rho(r_k)}}=\limsup_{r\to+\infty}\frac{T(r,u)}{r^{\rho(r)}}=\sigma_T\quad\text{and}\quad\limsup_{k\to+\infty}\frac{M(r_k,u)}{r_k^{\rho(r_k)}}=\limsup_{r\to+\infty}\frac{M(r,u)}{r^{\rho(r)}}=\sigma_M.$$
\end{proof}

\subsection{Calculations for $n=2$}
For small values of $n$ it is possible to get explicit expression for Khabibullin's constant $K_{n}(\rho)$. In particular when $n=2$ routine calculations show that 
$$\psi_{\rho}(t)=\frac{\rho t^{\rho-2}((\rho+1)t^{\rho}-(\rho-1))}{(1+t^{\rho})^3}$$
and evidently $\psi_{\rho}(t)<0$ for $t\in[0,\tau)$ and $\psi_{\rho}(t)>0$ for $t>\tau$ where $\tau$ is the non-zero solution of $\psi_{\rho}(t)=0$ given by the formula
\begin{equation*}
\tau(\rho)=\Big(\frac{\rho-1}{\rho+1}\Big)^{1/\rho}.
\end{equation*}
In this case $D_-=(0,\tau)$ and from the general formula for the indefinite integral we obtain 
$$\int_0^{\tau}t^{\rho/2+1}\psi_{\rho}(t)\,dt=-\frac{(\rho+1)^2}{4\rho}\Big(\frac{\rho-1}{\rho+1}\Big)^{3/2}-\Big(\frac{\rho}{2}+1\Big)\Big[\frac{\rho+1}{2\rho}\Big(\frac{\rho-1}{\rho+1}\Big)^{1/2}-\arctan{\frac{\rho-1}{\rho+1}}\Big]$$
whenever $\rho>1$. Hence for $n=2$ and $\rho>1$ we have 
\begin{equation}
\label{eq:K-2}
K_2(\rho)=\Big(\frac{\rho}{2}+1\Big)\frac{\pi}{2}+\frac{(\rho+1)^2}{4\rho}\Big(\frac{\rho-1}{\rho+1}\Big)^{3/2}+\Big(\frac{\rho}{2}+1\Big)\Big[\frac{\rho+1}{2\rho}\Big(\frac{\rho-1}{\rho+1}\Big)^{1/2}-\arctan{\frac{\rho-1}{\rho+1}}\Big].
\end{equation}
Note that $\lim_{\rho\downarrow 1}K_2(\rho)=3\pi/4$ coincides with the sharp estimate when $\rho= 1$. Hence $K_2(\rho)$ is continuous at $\rho=1$. Moreover we get an asymptotic behaviour of $K_2(\rho)$ given by
$$\lim_{\rho\uparrow+\infty}\frac{K_2(\rho)}{\rho}= \frac{4+\pi}{8}.$$
Again from the asymptotic analysis we obtain 
$$K_2(\rho)= \Big(\frac{\rho}{2}+1\Big)\frac{\pi}{2}+o(\rho)\;\text{as}\;\rho\downarrow 1$$
and 
$$K_2(\rho)= \Big(\frac{\rho}{2}+1\Big)\frac{\pi}{2}+\frac{\rho}{4}+\Big(\frac{\rho}{2}+1\Big)\,\Big(\frac{1}{2}-\frac{\pi}{4}\Big)<2\,\Big(\frac{\rho}{2}+1\Big)\frac{\pi}{2}-\frac{1}{2}\;\text{as}\;\rho\uparrow +\infty.$$
In both case as $\rho$ approaches $1$ from above or as $\rho$ gets arbitrarily large the estimate shows once more that it is smaller then $e\,P_2(\rho)$.
Calculating $K_{n}(\rho)$ explicitly when $n\geq 3$  gets complicated because of the form of $\psi_{\rho}(t)$. It is known that 
$$\psi_{\rho}(t)=(-1)^n\frac{p(t^{\rho})}{t^n(1+t^{\rho})^{n+1}}\quad\text{for}\quad t\geq 0,  n\geq 2$$
where $p$ is a polynomial in $t^{\rho}$ of the form
$p(x)=c_{n,n-1}x^{n}+c_{n,n-2}x^{n-1}+...+c_{n,1}x^2+c_{n,0}x$.
The coefficients $c_{n,k}$ are polynomials in $\rho$ satisfying a certain recurrence relation (see \cite[Proposition 3.1]{AB3}) which can prove to be helpful in studying the zeros of $p(x)$. These in turn determine the zeros $\tau$ of the function $\psi_{\rho}$.

\subsection{Comparing with other estimates for subharmonic functions}
Because the set of psh functions in $\mathbb C^n$ is a proper subset of the more general class of subharmonic functions in $\mathbb C^n$, it is of interest to compare the estimate $K_n(\rho)$ for psh functions of lower order $\rho> 1$ with the estimate for subharmonic functions of the same lower order. But $K_n(\rho)$ is sharp for the class of psh functions, therefore it must be the case that $K_n(\rho)$ is no greater than any nonimprovable estimate for subharmonic functions. Indeed we verify that this is the case by making explicit comparison with the estimate obtained by Dahlberg \cite{Dahlberg} for subharmonic functions in $\mathbb R^m, m\geq3$. 
For $\rho>0$ let the Gegenbauer functions $C_{\rho}^{\gamma}$ be given as the solutions of the differential equation 
\begin{equation}
\label{eq:Gegenbauer-diff}
(1-x^2)\,\frac{d^2f}{dx^2}-(2\gamma+1)x\,\frac{df}{dx}+\rho(\rho+2\gamma)f=0,\quad -1<x<1
\end{equation}
with the normalization condition 
\begin{equation}
	\label{eq:normalization}
	\lim_{x\uparrow 1}C^{\gamma}_{\rho}(x)=C_{\rho}^{\gamma}(1)=\frac{\Gamma(\rho+2\gamma)}{\Gamma(2\gamma)\,\Gamma(\rho+1)}.
\end{equation}
Let $a_{\rho}\coloneqq\sup\{t \;:\;C^{\frac{m-2}{2}}_{\rho}(t)=0\}$ and define $u_{\rho}$ in $\mathbb R^m, m\geq 3$ by the formula:

\begin{align}
\label{u-rho} 
u_{\rho}(x)\coloneqq\left\{
\begin{array}{lr}
0 & : x_1\leq a_{\rho}\,r\\
r^{\rho}\,C^{\frac{m-2}{2}}_{\rho}(x_1/r) & : x_1>a_{\rho}\,r
\end{array}
\right. 
\end{align} 
where $x:=(x_1,x_2,...,x_m)$ and $r\coloneqq |x|$. Note that $u_{\rho}$ is subharmonic in $\mathbb R^m$.
Dahlberg \cite{Dahlberg} then showed that for a subharmonic function $u$ in $\mathbb R^m, m\geq 3$ the following estimate is true
\begin{equation}
\label{eq:Dahlberg}
\vartheta(u)\leq \vartheta(u_{\rho}).
\end{equation} 
Identifying $\mathbb C^n\cong\mathbb R^{2n}$ then the inequality \eqref{eq:Dahlberg} holds also for functions $u$ that are subharmonic in $\mathbb C^n$. Note that now in the definition of $u_{\rho}$ we have $u_{\rho}(x)=r^{\rho}\,C^{n-1}_{\rho}(x_1/r)$ whenever $x_1<a_{\rho}r$ and $x:=(x_1,x_2,...,x_{2n}), r=|x|$. We calculate the characteristic functions $M(r,u_{\rho})$ and $T(r,u_{\rho})$:
\begin{align*}
M(r,u_{\rho})=\max\{u_{\rho}(x)\;:\;x\in\mathbb R^{2n}, |x|=r\}&=\max\{u_{\rho}(x)\;:\;x\in\mathbb R^{2n}, |x|=r, x_1>a_{\rho}r\}\\&=\max\{r^{\rho}\,C^{n-1}_{\rho}(x_1/r)\;:\;x\in\mathbb R^{2n}, |x|=r\}\\&=r^{\rho}\,C^{n-1}_{\rho}(1).
\end{align*}
Applying the formula for Nevalinna characteristic \eqref{Nev-main}, now on the unit sphere in $\mathbb R^{2n}$ yields
\begin{align*}
T(r,u_{\rho})=\int_{\mathbb S_{2n-1}}u^{+}_{\rho}(r\,\zeta)\,ds_{2n-1}(\zeta)=r^{\rho}\,\int_{\mathbb S_{2n-1}}C^{n-1}_{\rho}(\zeta_1)\cdot \chi_{\{\zeta_1>a_{\rho}\}}\,ds_{2n-1}(\zeta)
\end{align*}
where $\zeta:=(\zeta_1,\zeta_2,...,\zeta_{2n})\in\mathbb S_{2n-1}$ and $\chi_{\{\zeta_1>a_{\rho}\}}=1$ on the set $\{\zeta\in\mathbb S_{2n-1}\,:\,\zeta_1>a_{\rho}\}$ and vanishes elsewhere. Rewriting the last integral in terms of spherical coordinates implies
\begin{align*}
T(r,u_{\rho})&=\frac{r^{\rho}}{A(\mathbb S_{2n-1})}\,\int^{\pi}_0...\int^{\pi}_0\int_0^{\theta^*}C^{n-1}_{\rho}(\cos\theta_1)\,\sin^{2n-2}\theta_1\,\sin^{2n-3}\theta_2\,...\,\sin\theta_{2n-2}\,d\theta_1...\,d\theta_{2n-1}\\&
=\frac{r^{\rho}}{A(\mathbb S_{2n-1})}\,\int_0^{\theta^*}C^{n-1}_{\rho}(\cos\theta_1)\,\sin^{2n-2}\theta_1\,d\theta_1\,\int^{\pi}_0...\int^{\pi}_0\,\sin^{2n-3}\theta_2\,...\,\sin\theta_{2n-2}\,d\theta_2...\,d\theta_{2n-1}\\&
= \frac{A(\mathbb S_{2n-2})}{A(\mathbb S_{2n-1})}\,r^{\rho}\,\int_0^{\theta^*}C^{n-1}_{\rho}(\cos\theta_1)\,\sin^{2n-2}\theta_1\,d\theta_1
\end{align*}
where $A(\mathbb S_{k})$ is the area of the unit sphere in $\mathbb R^{k+1}$ and $\theta^*\coloneqq\arccos a_{\rho}$. As a result we obtain the ratio
$$\frac{M(r,u_{\rho})}{T(r,u_{\rho})}=C^{n-1}_{\rho}(1)\frac{A(\mathbb S_{2n-1})}{A(\mathbb S_{2n-2})}\cdot\Big(\int_0^{\theta^*}C^{n-1}_{\rho}(\cos\theta)\,\sin^{2n-2}\theta\,d\theta\Big)^{-1}.$$
Because the above quantity is independent of $r$ then we get
\begin{equation}
	\label{eq:u-phi}
	\vartheta(u_{\rho})=C^{n-1}_{\rho}(1)\frac{A(\mathbb S_{2n-1})}{A(\mathbb S_{2n-2})}\cdot\Big(\int_0^{\theta^*}C^{n-1}_{\rho}(\cos\theta)\,\sin^{2n-2}\theta\,d\theta\Big)^{-1}.
\end{equation}
To evaluate the last integral we make use of a recursion formula for Gegenbauer functions \cite{Kh99} given by 
\begin{equation}
\label{eq:recursion}
C^{n-1}_{\rho}(t)=\frac{1}{2^{n-1}\,(n-2)!}\,\frac{d^{n-2}}{dt^{n-2}}C^1_{\rho+n-2}(t)
\end{equation}
which yields 
\begin{align*}
\int_0^{\theta^*}C^{n-1}_{\rho}(\cos\theta)\,\sin^{2n-2}\theta\,d\theta&=\frac{1}{2^{n-1}\,(n-2)!}\,\int_0^{\theta^*}\frac{d^{n-2}}{d(\cos\theta)^{n-2}}C^1_{\rho+n-2}(\cos\theta)\,\sin^{2n-2}\theta\,d\theta\\&
=\frac{(-1)^{n-2}}{2^{n-1}\,(n-2)!}\,\int_0^{\theta^*}\frac{d^{n-2}}{d\theta^{n-2}}C^1_{\rho+n-2}(\cos\theta)\,\sin^{n}\theta\,d\theta.
\end{align*}
It is possible to find an explicit formula for $C^1_{\rho+n-2}(\cos\theta)$. Upon solving the differential equation \eqref{eq:Gegenbauer-diff} for $\gamma=1$ and $\rho\equiv \rho+n-2$ we get the expression 
\begin{equation}
\label{eq:solution}
C^1_{\rho+n-2}(\cos\theta)=\frac{1}{\cos\theta}\,\int^{\theta}_0\cos((\rho+n-1)t)\,dt=\frac{\sin((\rho+n-1)\theta)}{(\rho+n-1)\sin\theta},\quad 0<\theta<\pi.
\end{equation}
For illustration we consider the cases $n\in\{2,3\}$. 
\subsubsection{} When $n=2$ we have $C^1_{\rho}(\cos\theta)=\sin((\rho+1)\theta)/((\rho+1)\sin\theta)$ and the smallest $\theta>0$ for which $C^1_{\rho}(\cos\theta)=0$ is $\theta^*=\pi/(\rho+1)$. Then 
\begin{align*}
2\,(\rho+1)\,\int_0^{\theta^*}C^{1}_{\rho}(\cos\theta)\,\sin^{2}\theta\,d\theta&=\int_0^{\pi/(\rho+1)}\sin((\rho+1)\theta)\,\sin\theta\,d\theta\\&=\frac{1}{\rho}\,\sin\frac{\pi\rho}{\rho+1}-\frac{1}{\rho+2}\,\sin\frac{\pi(\rho+2)}{\rho+1}.
\end{align*}
By normalization formula \eqref{eq:normalization} we get $C^1_{\rho}(1)=\rho+1$, from the formula for the area of the unit sphere $A(\mathbb S_{k-1})=2\pi^{k/2}/\Gamma(k/2)$, we obtain $A(\mathbb S_{3})=2\pi^2, A(\mathbb S_{2})=4\pi$ therefore substituting in \eqref{eq:u-phi} yields
$$\vartheta(u_{\rho})=\pi(\rho+1)^2\cdot \Big(\frac{1}{\rho}\,\sin\frac{\pi\rho}{\rho+1}-\frac{1}{\rho+2}\,\sin\frac{\pi(\rho+2)}{\rho+1}\Big)^{-1}=\pi\rho\Big(1+\frac{\rho}{2}\Big)\,\frac{\rho+1}{\sin(\pi/(\rho+1))}.$$
Elementary calculations show that  
$$\frac{\rho+1}{\sin(\pi/(\rho+1))}\geq \frac{3}{\sin(\pi/3)}=2\sqrt{3}>e,\quad \rho\geq 2.$$
Therefore 
$$\vartheta(u_{\rho})>\pi\rho\Big(1+\frac{\rho}{2}\Big)\cdot e=e\,P_2(\rho),\quad\rho\geq 2.$$
In view of Remark \ref{r:3} and Theorem \ref{Maintheorem} then $\vartheta(u_{\rho})>K_2(\rho)$ for all $\rho\geq 2$. For the range $1<\rho<2$ one can compare directly with the explicit estimate for $K_2(\rho)$ given in \eqref{eq:K-2}.

\subsubsection{} When $n=3$ from the recursion formula \eqref{eq:recursion} it follows that
\begin{align*}
C^{2}_{\rho}(\cos\theta)=-\frac{1}{4\sin\theta}\cdot\frac{d}{ d\theta}C^1_{\rho+1}(\cos\theta)&=-\frac{1}{4}\Big(\frac{\cos((\rho+2)\theta)}{\sin^2\theta}-\frac{1}{\rho+2}\,\frac{\sin((\rho+2)\theta)\cos\theta}{\sin^3\theta}\Big)
\\&=\frac{1}{4}\Big((\rho+3)\,\frac{\sin((\rho+1)\theta)}{\sin^3\theta}-(\rho+1)\frac{\sin((\rho+3)\theta)}{\sin^3\theta}\Big).
\end{align*} 
Evaluating the integral 
\begin{align*}
\int_0^{\theta^*}C^{2}_{\rho}(\cos\theta)\,\sin^{4}\theta\,d\theta&=\frac{\rho+3}{4}\int^{\theta^*}_0\sin((\rho+1)\theta)\sin\theta\,d\theta-\frac{\rho+1}{4}\int^{\theta^*}_0\sin((\rho+3)\theta)\sin\theta\,d\theta\\&
=\frac{1}{8}\,\Big((\rho+3)\frac{\sin(\rho\,\theta^*)}{\rho}-2\,\sin((\rho+2)\theta^*)+(\rho+1)\frac{\sin((\rho+4)\theta^*)}{\rho+4}\Big).
\end{align*}
We obtain the estimate
$$\vartheta(u_{\rho})=3\pi\rho\Big(1+\frac{\rho}{2}\Big)\Big(1+\frac{\rho}{4}\Big)
\frac{(\rho+1)(\rho+3)}{m(\rho,\theta^*)}=3P_3(\rho)\,\frac{(\rho+1)(\rho+3)}{m(\rho,\theta^*)}
$$
where 
$$m(\rho,\theta^*):=(\rho+3)(\rho+4)\sin(\rho\,\theta^*)-2\rho(\rho+4)\,\sin((\rho+2)\theta^*)+\rho(\rho+1)\sin((\rho+4)\,\theta^*).$$
A rough approximation for $\theta^*$ is $\pi/(\rho+2)$ for large enough $\rho$. This implies upon substitution of this value in $m(\rho,\theta^*)$ that 
$$m(\rho,\theta^*)\approx ((\rho+3)(\rho+4)-\rho(\rho+1))\sin\Big(\frac{\rho}{\rho+2}\pi\Big)=6(\rho+2)\sin\Big(\frac{\rho}{\rho+2}\pi\Big).$$
Hence 
\begin{align*}
\vartheta(u_{\rho})=3P_3(\rho)\,\frac{(\rho+1)(\rho+3)}{m(\rho,\theta^*)}&\approx P_3(\rho)\frac{\rho+1}{2\cdot \sin(\rho\pi/(\rho+2))}>e^2\,P_3(\rho)
\end{align*}
for sufficiently large $\rho$.
Once more from Remark \ref{r:3} and Theorem \ref{Maintheorem} we have that for $\rho$ large enough $\vartheta(u_{\rho})>K_3(\rho)$. For small values of $\rho$ one has to explicitly compute $K_3(\rho)$ and make comparisons. But this is more laborious. For larger $n$ one can use the recursive relation \eqref{eq:recursion}, however computations get more complicated by the nature of the involved functions.

 \subsection*{Acknowledgements} I am grateful to Prof. B. Khabibullin for helpful comments and for sharing many materials related to the theory of subharmonic functions.
 \newline
 This work was in part supported by the DFG under Germany's Excellence Strategy – The Berlin Mathematics Research Center MATH+.
 
 \subsection*{Data availability} Data sharing not applicable to this article as no datasets were generated or analyzed during the current study.
\bibliographystyle{plain}
\bibliography{literature}

\end{document}